\makeindex \setcounter{tocdepth}{2}
\theoremstyle{plain}
\newtheorem{theorem}{Theorem}[section]
\newtheorem{proposition}[theorem]{Proposition}
\newtheorem*{assumption*}{Assumption}
\newtheorem{lemma}[theorem]{Lemma}
\theoremstyle{definition}
\newtheorem{remark}{Remark}[section]
\newtheorem*{goal*}{Goal}
\newtheorem*{problem*}{Comment}
\def\lra{{\longrightarrow}}
\DeclareMathOperator{\Gr}{Gr}
\DeclareMathOperator{\Fil}{Fil}
\DeclareMathOperator{\dR}{dR} 
\DeclareMathOperator{\id}{id}
\DeclareMathOperator{\et}{et} \DeclareMathOperator{\Norm}{Norm}
 \DeclareMathOperator{\CH}{CH}
\DeclareMathOperator{\AJ}{AJ} \DeclareMathOperator{\Aut}{Aut}
\DeclareMathOperator{\ord}{ord}
\DeclareMathOperator{\rank}{rank}
\DeclareMathOperator{\alg}{alg}
\DeclareMathOperator{\im}{Im}
\DeclareMathOperator{\Jac}{Jac}
\DeclareMathOperator{\old}{old}
\def\p{\mathfrak{p}}
\def\d{\mathrm{d}}
\def\Z{\mathbb{Z}}
\def\F{\mathbb{F}}
\def\Q{\mathbb{Q}}
\def\C{\mathbb{C}}
\def\bdf{\begin{defn}}
\def\edf{\end{defn}}
\def\Gal{{\rm Gal}}
\def\d1{d^{(1)}}
\def\d{\mathbf{d}}
\tikzset{
commutative diagrams/.cd,
arrow style=tikz,
diagrams={>=latex}}
\let\@wraptoccontribs\wraptoccontribs
\begin{document} 

 \title[Experiments with Ceresa classes of cyclic Fermat quotients]{Experiments with Ceresa classes of cyclic Fermat quotients}
\author{David T.-B. G. Lilienfeldt and Ari Shnidman}
 \address{Einstein Institute of Mathematics, Hebrew University of Jerusalem, Israel}
\email{davidterborchgram.lilienfeldt@mail.huji.ac.il}
\address{Einstein Institute of Mathematics, Hebrew University of Jerusalem, Israel}
\email{ariel.shnidman@mail.huji.ac.il}
\date{May 4, 2022}
\subjclass{11D41, 11G15, 11G40, 14C25}
\keywords{Abel-Jacobi map, Ceresa cycle, Fermat quotient, Beilinson-Bloch, Complex multiplication}

\begin{abstract}
We give two new examples of non-hyperelliptic curves whose Ceresa cycles have torsion images in the intermediate Jacobian.  For one of them, the central value of the $L$-function of the relevant motive is non-vanishing and the Ceresa cycle is torsion in the Griffiths group, consistent with the conjectures of Beilinson and Bloch. We speculate on a possible explanation for the existence of these torsion Ceresa classes, based on some computations with cyclic Fermat quotients. 
\end{abstract}

\maketitle

\section{Introduction}

Let $C$ be a smooth projective curve over a field $k$ with a point $e \in C(k)$. Let $J$ be the Jacobian of $C$, and let $\iota \colon C \hookrightarrow J$ be the Abel--Jacobi map sending $e$ to $0$.  The {\it Ceresa cycle} $\kappa_e(C)$ is the class of $\iota(C) - [-1]^*\iota(C)$ in the Chow group $\CH_1(J)_0$ of null-homologous $1$-cycles modulo rational equivalence on $J$. Let $\kappa(C)$ be the image of $\kappa_e(C)$  in the Griffiths group $\Gr_1(J)$ of null-homologous $1$-cycles modulo algebraic equivalence, which does not depend on the choice of  $e$. When $C$ is hyperelliptic, we have $\kappa(C) = 0$. On the other hand, Ceresa showed that $\kappa(C)$ has infinite order for the general curve of genus $g \geq 3$ \cite{ceresa}. It was speculated that this may be the case for all non-hyperelliptic curves, but recently two counterexamples were discovered.

Bisogno, Li, Litt, and Srinivasan  showed that for the genus 7 Fricke--Macbeath curve $C_{\mathrm{FM}}$, the image of $\kappa_e(C_{\mathrm{FM}})$ under the \'etale Abel--Jacobi map is torsion \cite{blls}. Gross then showed that the appropriate $L$-function has non-vanishing central value, giving more evidence that $\kappa(C_{\mathrm{FM}})$ is torsion in the Griffiths group \cite{grossFM}, as we will explain. Qiu and Zhang have recently shown that the corresponding Gross--Kudla--Schoen cycle vanishes in the Chow group \cite{qiuzhang}. In particular, this implies that $\kappa(C_{\mathrm{FM}})$ is torsion for the Fricke--Macbeath curve \cite[Rem.\ 3.4]{FLV}.

The second example is due to Beauville who shows in  \cite{beauville} that for the genus 3 plane curve $D^9 \colon y^3 = x^4 +x$, the complex Abel--Jacobi image of $\kappa_e(D^9)$ is torsion, for appropriate choice of $e$. Assuming the conjectural injectivity of the Abel--Jacobi map, this implies that $\kappa_e(D^9)$ and $\kappa(D^9)$ are torsion. Beauville and Schoen then gave an unconditional proof that $\kappa(D^9)$ is torsion in  \cite{beauvilleschoen}.

\subsection{New examples}
 
Beauville's proof in \cite{beauville} is elegant and simple, making use of an automorphism of order 9 on the curve. The aim of this note is to find more non-hyperelliptic examples of Ceresa cycles which are Abel--Jacobi trivial via Beauville's method. The curves we consider are the cyclic quotients of the Fermat curves $F^m : X^m+Y^m+Z^m=0$.  Beauville's curve $D^9$ is a cyclic quotient of $F^9$ (see Proposition \ref{app:henn}), so this is a natural family to consider. Ceresa cycles of cyclic Fermat quotients have been studied quite a bit \cite{eskandarimurty,kimura,otsubo, tadokoro2}, but mostly when $m$ is prime or when the Jacobian has many simple factors, which perhaps explains why these examples have been missed.   

We have turned Beauville's method into an algorithm and implemented it in SageMath \cite{sagemath}. 
On the positive side, we found two new examples:
\begin{theorem}
Let $D^{12}$ and $D^{15}$ be the smooth projective curves with affine equations $y^3 = x^4 + 1$ and $y^3 = x^5 + 1$, of genus $3$ and $4$ respectively, and let $e$ be the point at infinity. Then the images of $\kappa_e(D^{12})$ and $\kappa_e(D^{15})$ under the complex Abel--Jacobi map are torsion. 
\end{theorem}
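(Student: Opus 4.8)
The plan is to follow Beauville's method, exploiting the large automorphism group of these curves. Write $D^m \colon y^3 = x^k + 1$ with $k = 4$ (so $m = 12$, $g = 3$) or $k = 5$ (so $m = 15$, $g = 4$); since $\gcd(3,k) = 1$ the smooth model has a \emph{unique} point at infinity, which is exactly our base point $e$. The curve carries the commuting automorphisms $(x,y) \mapsto (\zeta_k x, y)$ and $(x,y) \mapsto (x, \omega y)$, where $\zeta_k$ and $\omega$ are primitive $k$-th and cube roots of unity; because $\gcd(3,k)=1$, their product $\sigma(x,y)=(\zeta_k x,\omega y)$ has order $m=3k$ and generates a cyclic group $\mu_m \subseteq \Aut(D^m)$, and crucially $\sigma$ fixes $e$. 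First I would record that, since $\sigma$ fixes $e$, it commutes with the Abel--Jacobi embedding $\iota$ and with $[-1]$, whence $\sigma_* \kappa_e(D^m) = \kappa_e(D^m)$ \emph{as cycles}. Consequently the complex Abel--Jacobi image $\nu = \AJ(\kappa_e(D^m))$ is fixed by the automorphism that $\sigma$ induces on the relevant intermediate Jacobian.

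Next I would set up the target of $\AJ$ and invoke an elementary torus lemma (the one used by Beauville). The invariant $\nu$ lands in the intermediate Jacobian $T$ attached to the primitive weight-$3$ Hodge structure $H = \wedge^3 H^1(D^m,\mathbb{Z})_{\mathrm{prim}}$ (equivalently, via Poincar\'e duality, to $H^{2g-3}_{\mathrm{prim}}$; since torsion-ness is isogeny-invariant we may work with the weight-$3$ structure). Here $T = H_\mathbb{C}/(\Fil^2 H_\mathbb{C} + H)$, with tangent space $W = H_\mathbb{C}/\Fil^2 H_\mathbb{C} = H^{1,2} \oplus H^{0,3}$ (restricted to the primitive part). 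The lemma is that a finite-order automorphism of a complex torus $V/\Lambda$ has finite fixed locus as soon as it has no nonzero fixed vector on $V$. Applied to $\sigma$ acting on $T$, it reduces the theorem to the purely linear-algebraic claim that $1$ is \emph{not} an eigenvalue of $\sigma^*$ on $W$; granting this, $T^{\sigma}$ is finite and the fixed class $\nu$ is torsion.

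Finally I would carry out the eigenvalue computation. For the superelliptic curve $y^3 = x^k+1$ there is the classical basis of holomorphic differentials $x^{i} y^{-j}\, dx$ indexed by lattice points $(i,j)$ with $1 \le j \le 2$ and $0 \le i \le \lfloor jk/3\rfloor - 1$, on which $(x,y)\mapsto(\zeta_k x, y)$ acts by $\zeta_k^{\,i+1}$ and $(x,y)\mapsto(x,\omega y)$ by $\omega^{-j}$; conjugating gives the action on $H^{0,1}$, hence the full decomposition of $H^1(D^m,\mathbb{C})$ into $\sigma$-eigenlines. From this one reads off the $\sigma$-eigenvalues on $H^{1,2} = H^{1,0}\otimes \wedge^2 H^{0,1}$ and $H^{0,3} = \wedge^3 H^{0,1}$ as products of triples of $H^1$-eigenvalues, removes the Lefschetz contribution $\theta\wedge H^{0,1}\subseteq H^{1,2}$ to pass to the primitive part (here the polarization class $\theta$ has eigenvalue $1$, so this is a clean bookkeeping step), and checks that the value $1$ never occurs on $W$. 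Since $\sigma$ already generates all of $\mu_m=\mu_k\times\mu_3$, nothing is lost by using a single automorphism. I expect this last tabulation to be the main obstacle: it is a finite but delicate count of products of roots of unity, precisely the step we have automated in SageMath. The content of the theorem is that for $k = 4, 5$ the eigenvalue $1$ is indeed absent from $W$, whereas for generic $k$ it is present --- which is what distinguishes these two curves within the family.
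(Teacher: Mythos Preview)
Your proposal is correct and follows exactly the paper's approach (Beauville's method): exhibit an order-$m$ automorphism $\sigma$ fixing the base point and verify that $1$ is not an eigenvalue of $\sigma$ on the tangent space of the intermediate Jacobian, via the standard eigenbasis of holomorphic differentials; the paper carries this out on the Fermat-quotient models $C^{12}_{1,3}$ and $C^{15}_{1,5}$ with base point $(0,0)$, while you work directly on $D^m$ with the point at infinity, which is equivalent.

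One caution on your bookkeeping: the assertion that the Abel--Jacobi image $\nu$ lands in the \emph{primitive} intermediate Jacobian is not justified as stated (for a general base point the Lefschetz projection of $\kappa_e$ is governed by the class $(2g-2)e-K_C$ in $J$, which need not vanish). You do not actually need this restriction: since $\sigma$ already has no eigenvalue $1$ on $H^{0,1}$, it has none on the Lefschetz piece $\theta\wedge H^{0,1}$ either, so checking the full $H^{1,2}\oplus H^{0,3}$ of $\wedge^3 H^1$ suffices---and this is equivalent (via $\wedge^{2g-3}H^1\cong(\wedge^3 H^1)^\vee$, as $\det(\sigma\mid H^1)=1$) to the paper's check on $H^{g-2,g-1}(J)\oplus H^{g-3,g}(J)$.
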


On the negative side, we ran the algorithm on all cyclic Fermat quotients of $F^m$ up to $m=50$, and these were the only examples we found. Enlarging $m$ seems unlikely to discover more examples by Beauville's method (see Section \ref{s:discussion}). 

However, our experiments uncovered a phenomenon that points to a possible explanation of the three torsion Ceresa examples $D^9,D^{12}$, and $D^{15}$. In all three cases, there is an algebraic correspondence between $D^m$ and a hyperelliptic quotient of $F^m$.  
We use work of Aoki to show that the minimal cyclic Fermat quotients with this property and with $m \not\equiv 2\pmod{4}$ are our curves $D^9, D^{12},D^{15}$, and a genus 9 quotient of $F^{21}$.  This suggests the possibility that the Ceresa cycle is torsion because some multiple of it arises via correspondence from a hyperelliptic Ceresa class.  

The kernel of the Abel--Jacobi map (restricted to cycles defined over $\bar \Q$) is conjectured to be torsion. This would imply that the Ceresa cycle of each of the curves $D^9$, $D^{12}$, and $D^{15}$ (with respect to a certain base point) is torsion modulo rational equivalence, and hence modulo algebraic equivalence as well. For $D^9$, the latter statement is the main result of \cite{beauvilleschoen}. The argument of \cite{beauvilleschoen} applies to the curve $D^{12}$ as well, resulting in the following:

\begin{theorem}\label{D12tors}
The class $\kappa(D^{12})$ is torsion in the Griffiths group.
\end{theorem}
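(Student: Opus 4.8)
The plan is to transport the argument of Beauville and Schoen \cite{beauvilleschoen} from $D^9$ to $D^{12}$, taking as input the triviality statement proved above. By the preceding theorem the image of $\kappa_e(D^{12})$ under the complex Abel--Jacobi map is torsion, so its class in the intermediate Jacobian $J^2(J)$ is torsion. The subtlety in passing from this to torsion in the Griffiths group $\Gr_1(J)$ is that the Abel--Jacobi map records $\Gr_1(J)$ only after one quotients the intermediate Jacobian by its algebraic part $J^2(J)_{\mathrm{alg}}$: a priori a class could be non-torsion in $\Gr_1(J)$ while dying in $J^2(J)/J^2(J)_{\mathrm{alg}}$, and ruling this out is precisely what the Beauville--Schoen method accomplishes.

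The engine of that method is the complex multiplication on $J$. The curve $D^{12}\colon y^3=x^4+1$ is a cyclic quotient of the Fermat curve $F^{12}$ and carries the order-$12$ automorphism $(x,y)\mapsto(\zeta_4 x,\zeta_3 y)$, refining the separate $\mu_3$- and $\mu_4$-actions $y\mapsto\zeta_3 y$ and $x\mapsto\zeta_4 x$; this is exactly analogous to the order-$9$ automorphism of $D^9$ exploited in \cite{beauville}. Consequently $H^1(D^{12})$, and hence $H^3(J)$, decomposes into CM eigenspaces indexed by characters of the automorphism group, and the class of $\kappa_e(D^{12})$, lying in the $[-1]^*=-1$ part, is supported on a distinguished collection of these eigen-pieces. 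First I would isolate that collection and identify the corresponding sub-Hodge structure $V\subset H^3(J)$, using the description of the cohomology of cyclic Fermat quotients in terms of Jacobi-sum characters already used elsewhere in this paper.

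Next I would verify, as Beauville--Schoen do for $D^9$, that on this CM piece $V$ algebraic and homological equivalence agree up to torsion: concretely, that the associated intermediate Jacobian $J_V$ is isogenous to an abelian variety of CM type whose transcendental part is absorbed into the algebraic part, so that the kernel of the Abel--Jacobi map modulo algebraic equivalence is torsion on $V$. Granting this, torsion of the Abel--Jacobi image of $\kappa_e(D^{12})$ forces $\kappa(D^{12})$ to be torsion in $\Gr_1(J)$, as claimed. That $D^{12}$ has genus $3$, with a Jacobian of the same shape as that of $D^9$ and the structure of a cyclic triple cover of $\PP^1$, is what lets the formal part of the argument go through verbatim.

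The main obstacle is the CM numerics rather than the formal structure. Since $D^{12}$ and $D^9$ are defined by different equations, their CM types differ, and one must recompute the eigenspace data for $y^3=x^4+1$ and check that \emph{every} eigen-piece carrying the Ceresa class falls within the range of CM Hodge structures for which the Beauville--Schoen vanishing of the transcendental intermediate Jacobian holds. Confirming that this match is exact, so that no eigen-component escapes the hypotheses of their lemma, is the delicate point; I expect it to hold precisely because $D^{12}$ shares with $D^9$ the genus-$3$, cyclic-triple-cover structure.
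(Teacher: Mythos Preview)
Your proposal mischaracterizes the Beauville--Schoen argument. Their method is not a Hodge-theoretic analysis of CM pieces of the intermediate Jacobian, and it does not proceed by showing that some transcendental part of $J_V$ is absorbed into the algebraic part. The actual argument in \cite{beauvilleschoen}, and the one the paper runs here, is geometric and Chow-theoretic: one forms the quotient threefold $W=J/\langle\sigma\rangle$ by the order-$12$ automorphism, applies Reid's criterion to the eigenvalues of a suitable power $\sigma^7$ (checking that $1+2+5<12$) to see that $W$ has non-canonical singularities, and concludes that $W$ is uniruled. Uniruledness then forces $\kappa(D)$ to be torsion in $A_1(J^0)\otimes\Q$, where $J^0$ is the open locus on which $\langle\sigma\rangle$ acts freely. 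None of this involves decomposing $H^3(J)$ into eigenspaces or matching CM types against a vanishing lemma; the Abel--Jacobi torsion statement from the previous theorem is not even used as input.

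There is also a genuine new obstacle specific to $D^{12}$ that your sketch does not anticipate. For $D^9$ the non-free locus $F\subset J$ is $0$-dimensional, so triviality on $J^0$ immediately gives triviality on $J$. For $D^{12}$, however, $\sigma^6$ is the hyperelliptic-type involution $(x,y)\mapsto(-x,y)$, and $\ker(1_J-\sigma^6)$ has a $1$-dimensional identity component: the elliptic curve $E\colon y^3=x^2+1$ sitting inside $J$ via pullback along the double cover $D\to E$, together with its translates by the Prym $2$-torsion. The paper must therefore argue further: a multiple of $\kappa(D)$ is algebraically equivalent to a combination of these translates of $E$, all of which are algebraically equivalent to $E$ itself, and since $\kappa(D)$ is null-homologous that combination is forced to be zero. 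Your proposal, built on the wrong picture of the method, has no mechanism to detect or handle this $1$-dimensional fixed locus.
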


\subsection{$L$-function calculations}

The existence of non-torsion algebraic cycles in Chow groups is the main theme of the conjecture of Beilinson and Bloch \cite{bloch}. Given a smooth projective variety $X$ over a number field $K$, the conjecture predicts that the rank of the group $\CH^r(X)_0$ of null-homologous cycles of codimension $r$ is equal to the order of vanishing of the $L$-function of the motive $H^{2r-1}(X)$ in degree $2r-1$ at $s=r$. Similarly, refinements of this conjecture predict that in certain situations, $L$-functions of submotives of $H^{2r-1}(X)$ detect properties of cycles modulo algebraic equivalence (see Section \ref{sec:BB}). 

Both curves $D^9$ and $D^{12}$ have genus $3$, while $D^{15}$ has genus $4$. Let $J^9, J^{12},$ and $J^{15}$ denote their respective Jacobians. Since Jacobians of Fermat curves (and hence Jacobians of their quotients) admit complex multiplication, there are submotives $M^9$ and $M^{12}$ of type $(3,0)+(0,3)$ inside $H^3(J^9)$ and $H^3(J^{12})$ respectively. They are defined over the totally real fields $\Q(\zeta_9)^+$ and $\Q(\zeta_{12})^+$ respectively, and they are predicted by Bloch to account for the algebraic equivalence classes of the Ceresa cycles of $D^9$ and $D^{12}$. Similarly, there is a submotive $M^{15}$ of type $(4,1)+(1,4)$ inside $H^5(J^{15})$. It is defined over $\Q(\zeta_{15})^+$, and is predicted to account for $\kappa(D^{15})$.
Using Magma \cite{Magma}, we computed the orders of vanishing of the relevant $L$-functions:

\begin{theorem}
We have 
\[
\begin{array}{lll}
    \ord_{s=2} L(M^9/\Q(\zeta_9)^+,s) & = & 1 \\
    \ord_{s=2} L(M^{12}/\Q(\zeta_{12})^+,s) &= & 0 \\
    \ord_{s=3} L(M^{15}/\Q(\zeta_{15})^+,s) & = & 2.
\end{array}
\]
\end{theorem}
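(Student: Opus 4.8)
The plan is to reduce each of these three $L$-function order-of-vanishing computations to an explicit numerical evaluation that Magma can perform, after first identifying the motive $M^m$ concretely as a piece of $H^*(J^m)$ cut out by the complex multiplication action. Because the Jacobians $J^9$, $J^{12}$, $J^{15}$ are quotients of Fermat Jacobians, their cohomology decomposes under the action of the relevant group of roots of unity into one-dimensional eigenspaces indexed by characters, and the Hodge types of these eigenspaces are governed by the classical recipe of Jacobi sums (the $\langle \cdot \rangle$ fractional-part formula for Fermat curves). So the first step is to isolate precisely which characters contribute to the $(g,0)+(0,g)$ summand (respectively the $(4,1)+(1,4)$ summand for $m=15$), thereby pinning down $M^m$ as a Grossencharacter motive over the stated totally real field $\Q(\zeta_m)^+$.

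The second step is to express the $L$-function $L(M^m/\Q(\zeta_m)^+,s)$ as a product of Hecke $L$-functions attached to these algebraic Hecke characters (Grossencharacters) of the CM field $\Q(\zeta_m)$. This is where the CM structure does all the work: each Jacobi-sum eigenspace corresponds to a Hecke character whose $L$-function is entire with known conductor, gamma factor, and functional equation, so the motivic $L$-function is a finite product of such analytic objects whose completed versions satisfy a standard functional equation $s \leftrightarrow (w+1) - s$ with $w$ the motivic weight. Having written down the gamma factors and conductor explicitly, I can read off the sign of the functional equation and the center of symmetry, which already constrains the parity of the order of vanishing at the central point.

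The third and computational step is to feed these Hecke $L$-functions into Magma, using its built-in machinery for $L$-functions of algebraic Hecke characters over number fields, and to numerically evaluate the order of vanishing at $s=2$ (for $m=9,12$) and at $s=3$ (for $m=15$). Concretely I would verify the functional equation numerically to certify that the gamma factor and conductor are correct, then compute the leading Taylor coefficients at the central point to detect vanishing. The sign of the functional equation forces the order of vanishing to be odd for $M^9$ (consistent with the answer $1$) and even for $M^{12}$ and $M^{15}$ (consistent with $0$ and $2$), which provides a sanity check: the computation only needs to distinguish $0$ from $2$ in the even cases and confirm exactly $1$ in the odd case.

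The main obstacle will be the correct and rigorous bookkeeping in the first two steps, namely matching the motive $M^m$ to the precise collection of Hecke characters with the right gamma factor and conductor, since a sign error or an incorrect local factor at a ramified prime would invalidate the analytic continuation and functional equation that Magma relies on. Once the $L$-function is correctly assembled and its functional equation is numerically verified to high precision, the order of vanishing is a routine (if delicate) numerical determination; the genuine mathematical content lies in the identification of the motive and its Hecke-character decomposition rather than in the evaluation itself.
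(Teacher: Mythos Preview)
Your proposal is correct and follows essentially the same approach as the paper: decompose $H^*(J^m)$ into eigenspaces under the $\mu_m$-action, identify $M^m$ with a product of Jacobi-sum Grossencharacter motives over $\Q(\zeta_m)$, and then use Magma to evaluate the resulting Hecke $L$-functions at the central point after pinning down the correct character by matching local $L$-factors at small primes. The paper carries this out in detail (including the explicit infinity types, conductors, and the comparison of Euler factors at a handful of primes needed to locate the character inside Magma's finite list), and finds the signs and orders of vanishing exactly as you predict; the only point you leave implicit is that for $m=15$ the motive $M^{15}$ decomposes into four Hecke characters rather than one, two of which individually have sign $-1$ and simple vanishing, accounting for the total order $2$.
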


The non-vanishing of the $L$-function of $M^{12}$ at its center, together with Theorem \ref{D12tors}, constitutes new evidence for the Beilinson--Bloch conjectures.
If we assume the Beilinson--Bloch conjectures, then $D^{12}$ gives an example of a non-hyperelliptic Jacobian over $\Q$ whose Griffiths group $\Gr_1(J)$ has rank 0;  see Section \ref{subsec:Lfunctions}.
Note that for Beauville's original example $D^9$, we found that the order of vanishing is $1$
despite the fact that $\kappa(D^9)$ is torsion. 

\subsection{Outline}

Section \ref{s:ceresa} provides background on Beauville's method and cyclic Fermat quotients. In Section \ref{s:algoresult}, we apply Beauville's method to a genus $4$ curve as a way of illustrating the algorithmic aspect of the method, and then present the results of our implementation in SageMath. In Section \ref{s:discussion}, we compare our results with some recent work of Eskandari and Murty. After some background on the Beilinson--Bloch conjecture and Jacobi sums, we compute the orders of vanishing of the relevant $L$-functions in Section \ref{s:Lfct}.  Finally, in Section \ref{s:speculation} we speculate on some connections with hyperelliptic cyclic Fermat quotients.

\section{Preliminaries}\label{s:ceresa}
Let $C$ be a smooth, projective, geometrically integral curve over a field $k$ of genus $g\geq 3$. We sometimes consider affine and possibly singular models of $C$, but the Ceresa cycles will always be for the smooth projective model.  Let $\kappa_e = \kappa_e(C)$ and $\kappa = \kappa(C)$ be the Ceresa cycles and classes defined in the introduction.  Let $J$ be the Jacobian of $C$.

\subsection{Beauville's method}\label{s:beauville}

Assume now that $k = \C$. In \cite{beauville} Beauville considers the image of $\kappa_e$ under the complex Abel--Jacobi map 
\begin{equation}\label{JacCAJ}
\AJ_C : \CH^{g-1}(J)_0\lra J^{g-1}(J) :=\frac{\Fil^{2} H^{3}_{\dR}(J)^\vee}{\im H_{3}(J(\C), \Z)}.
\end{equation}
This map is defined by the integration formula 
\[
\AJ_C(Z)(\alpha):=\int_{\partial^{-1}(Z)} \alpha, \qquad \text{for all } \alpha\in \Fil^{2} H^{3}_{\dR}(J),
\]
where $\partial^{-1}(Z)$ denotes any continuous $3$-chain in $J(\C)$ whose boundary is $Z$.

Beauville's idea to prove that $\AJ_C(\kappa_e)$ is torsion in certain instances is simple and elegant. Suppose that $C$ has an automorphism $\sigma$ that fixes a point $e\in C(k)$. The push-forward $\sigma_*$ then fixes the Ceresa cycle $\kappa_e$. By functoriality of Abel--Jacobi maps, $\AJ_C(\kappa_e)$ is then fixed by $(\sigma^*)^\vee$ in $J^{g-1}(J)$. If the complex torus $J^{g-1}(J)$ has only finitely many fixed points under $(\sigma^*)^\vee$, or equivalently by \cite[13.1.2]{langebirkenhake}, if $1$ is not an eigenvalue for the action of $\sigma$ on the tangent space $T^{g-1}(C)$ of $J^{g-1}(J)$ at $0$, then $\AJ_C(\kappa_e)$ must be torsion. 
The tangent space $T^{g-1}(C)$ is given by 
\begin{equation}\label{tangent}
H^{g-2, g-1}(J) \oplus H^{g-3, g}(J) = \left(\bigwedge\nolimits^{g-2} V\otimes \bigwedge\nolimits^{g-1} V^* \right) \oplus  \left(\bigwedge\nolimits^{g-3} V\otimes \bigwedge\nolimits^{g} V^*\right),  
\end{equation}
where $V:=H^{1,0}(J)=H^0(C, \Omega_C^1)$ and $V^*$ denotes its $\C$-dual. 

Beauville carries out this idea for the curve $y^3 = x^4 +x$  using the order $9$ automorphism sending $(x, y) \mapsto (\zeta_9^3x, \zeta_9 y)$. Here $\zeta_9$ denotes a fixed choice of primitive $9$-th root of unity. 

\begin{remark}
Roughly speaking, one can view Beauville's strategy as a Hodge-theoretic analogue of the $\ell$-adic approach of \cite{blls}, but the latter approach is rather more elaborate.  Beauville's approach is more convenient to implement because it is enough to consider a single automorphism. 
\end{remark}

\subsection{Cyclic Fermat quotients}\label{s:fermat}

We collect some results on cyclic Fermat quotients, from  \cite{colemanfermat} and the references therein. Recall that $m$ denotes a positive integer. For any two integers $a, b$ satisfying $0<a,b<m$ and $\gcd(m, a, b, a+b)=1$, let $C^m_{a, b}$ be the smooth projective curve birational to
\begin{equation}\label{cFq}
F^m_{a, b} : y^m=(-1)^{a+b}x^a(1-x)^b.
\end{equation}
Denote by $J^m_{a,b}$ the Jacobian of $C^m_{a, b}$.
The map $F^m \lra F^m_{a,b}$ sending $(x \colon y \colon 1) \mapsto (-x^m, x^ay^b)$ induces a map $f_{a,b}^m \colon F^m \lra C^m_{a,b}$. 
The genus of $C_{a, b}^m$ is 
\begin{equation}\label{genus}
\frac{1}{2}(m-(\gcd(m, a)+\gcd(m, b)+\gcd(m, a+b))+2).
\end{equation}

If two other integers $a'$ and $b'$ satisfy the relation
\begin{equation}\label{equiv}
\{ a, b, m-a-b \}\equiv \{ ta', tb', t(m-a'-b') \} \pmod m, \: \text{ for some } t\in (\Z/m\Z)^{\times},
\end{equation}
henceforth written $(a, b)\sim_m (a', b')$,
then there is an isomorphism $C^{m}_{a,b}\simeq C^m_{a', b'}$. We will consequently restrict our attention to equivalence classes $[a, b]$ of pairs with respect to $\sim_m$. 

\begin{proposition}\label{p:hyperelliptic}
The cyclic Fermat quotient $C^m_{a,b}$ is hyperelliptic if and only if
\[
(a, b)\sim_m (1,1) \qquad \text{ or } \qquad m=2n \text{ and } (a, b) \sim_m (1, n).
\]
\end{proposition}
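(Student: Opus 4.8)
The plan is to prove the two implications separately. Throughout I may use $\sim_m$ to replace $(a,b)$ by any convenient representative, so it is natural to record the symmetric data as the multiset $\{a,b,c\}$ with $c\equiv-(a+b)\pmod m$, $0<c<m$, normalized (scaling by $-1$ if necessary) so that $a+b+c=m$; the group $(\Z/m\Z)^\times$ acts on this multiset by scaling, and the two claimed families correspond to the configurations ``two of $a,b,c$ coincide'' (the class of $(1,1)$) and ``one of $a,b,c$ equals $m/2$'' (the class of $(1,n)$, $m=2n$).

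For the ``if'' direction I would simply write down a degree-two map to $\PP^1$. For $C^m_{1,1}\colon y^m=x(1-x)$ the function $y$ works, since $x$ satisfies $x^2-x+y^m=0$ over $\C(y)$, so $[\C(x,y):\C(y)]=2$. For $C^{2n}_{1,n}\colon y^{2n}=(-1)^{1+n}x(1-x)^n$ the function $w:=y^2/(1-x)$ works: one computes $w^n=(-1)^{1+n}x$, whence $x=(-1)^{1+n}w^n\in\C(w)$ and $y^2=w\bigl(1-(-1)^{1+n}w^n\bigr)\in\C(w)$, so $\C(x,y)=\C(w)(y)$ has degree $2$ over $\C(w)$. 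In each case the resulting $g^1_2$ makes the (genus $\geq2$) curve hyperelliptic.

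The ``only if'' direction is the substantial one, and I would route it through the eigenspaces of the order-$m$ automorphism $\sigma\colon(x,y)\mapsto(x,\zeta_m y)$. First I would recall the standard description of holomorphic differentials: writing $\langle t\rangle$ for the fractional part of $t$, the forms $x^i(1-x)^j y^{-r}\,dx$ span $H^0(C^m_{a,b},\Omega^1)$, and computing their orders above $0,1,\infty$ shows that the $\zeta_m^{-r}$-eigenspace $V_r$ satisfies
\[
\dim V_r=\max\Bigl(0,\ \langle\tfrac{ra}{m}\rangle+\langle\tfrac{rb}{m}\rangle+\langle\tfrac{rc}{m}\rangle-1\Bigr)\in\{0,1\}.
\]
In particular the $\sigma$-action on $H^0(\Omega^1)$ is multiplicity-free, and the weight set $W=\{\,r:\ \langle\tfrac{ra}{m}\rangle+\langle\tfrac{rb}{m}\rangle+\langle\tfrac{rc}{m}\rangle=2\,\}$ has size $g$. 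Next I would use that a hyperelliptic curve of genus $g\geq2$ has canonical image a rational normal curve $R\cong\PP^1$ in $\PP^{g-1}=\PP(H^0(\Omega^1)^\vee)$. Because $\phi_K$ is $\sigma$-equivariant, $\sigma$ preserves $R$ and acts diagonalizably on $\PP^{g-1}$ with distinct eigenlines (multiplicity-freeness); its restriction to $R$ is then a finite-order element of $\PGL_2$, diagonalizable as $[s:t]\mapsto[s:\lambda t]$, so on the coordinates $s^{g-1-k}t^k$ of the degree-$(g-1)$ embedding it has eigenvalues $\lambda^k$. Transporting back, the elements of $W$ must form an arithmetic progression modulo $m$.

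The hard part will be the resulting combinatorial classification: I must show that $W$ is an arithmetic progression modulo $m$ precisely when two of $a,b,c$ coincide or one of them equals $m/2$. The plan is to study the jump function $r\mapsto\langle\tfrac{ra}{m}\rangle+\langle\tfrac{rb}{m}\rangle+\langle\tfrac{rc}{m}\rangle$, whose increment from $r$ to $r+1$ equals $1$ minus the number of $ra,rb,rc$ that cross a multiple of $m$, hence lies in $\{1,0,-1,-2\}$, and to argue that its ``top level'' $W$ forms an arithmetic progression only in the two listed configurations of $\{a,b,c\}$. Finally I would translate these back through the scaling action: two equal exponents force a common value coprime to $m$ (else $\gcd(m,a,b,a+b)>1$) and hence the class of $(1,1)$, while an exponent equal to $m/2$ is necessarily paired with a coprime exponent and hence gives the class of $(1,n)$ with $m=2n$. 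Together with the explicit maps above and the genus formula \eqref{genus} to handle low-genus cases, this would give the stated equivalence; I expect the AP-classification step to be where essentially all the work lies.
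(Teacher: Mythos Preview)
The paper does not supply its own argument here: the proof is a one-line citation to Coleman's work \cite[\S IV Prop.\ 8 \& Cor.\ 8.1]{colemanfermat}. So there is no in-paper proof to compare against, and your proposal should be read as an attempt to reconstruct a direct argument.

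Your outline is sound and is, in spirit, the standard route (and close to Coleman's). The ``if'' direction is correct: the functions $y$ on $C^m_{1,1}$ and $w=y^2/(1-x)$ on $C^{2n}_{1,n}$ visibly give degree-$2$ maps to $\PP^1$. For ``only if'', the key geometric step is right: multiplicity-freeness of the $\mu_m$-action on $H^0(\Omega^1)$, together with the fact that the canonical image of a hyperelliptic curve is a rational normal curve, forces the induced automorphism on $R\cong\PP^1$ to be diagonalizable (the involutive case is excluded by multiplicity-freeness once $g\geq3$), and hence the eigencharacters on $H^0(\Omega^1)$ form a geometric progression, i.e.\ the exponents $r\in W$ form an arithmetic progression modulo $m$.

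The genuine gap is the one you flag yourself: step~4, the classification of triples $\{a,b,c\}$ with $a+b+c=m$ for which $W=\{r:\langle ra/m\rangle+\langle rb/m\rangle+\langle rc/m\rangle=2\}$ is an arithmetic progression, is asserted but not carried out. This is not a formality; it is where essentially all the content lies, and your jump-function heuristic, while a reasonable starting point, does not by itself pin down the two families. You would also need to handle separately the values of $r$ where one of $ra,rb,rc$ is divisible by $m$ (so the corresponding fractional part vanishes), since your dimension formula and increment analysis behave differently there and these are exactly the places where $\gcd(m,a)$, $\gcd(m,b)$, $\gcd(m,c)>1$ intervene. Until that combinatorial analysis is written out, the proposal is an outline rather than a proof.
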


\begin{proof}
This is \cite[\S IV Prop.\ 8 \& Cor.\ 8.1]{colemanfermat}.
\end{proof}

Denote by $\mu_m$ the group of $m$-th roots of unity and let $\zeta_m$ be a fixed choice of primitive $m$-th root of unity.
Note that $\mu_m$ acts on $C=C^m_{a,b}$ by scaling the $y$-coordinate in \eqref{cFq}. One can see the action of $\mu_m$ on the space $V=H^0(C, \Omega_C^1)$ by pulling back the differential forms in $V$ to the Fermat curve $F^m$ via the map $f=f_{a,b}^m$. 
A basis for $f^*V$ is 
\begin{equation}\label{diff}
L^m_{a,b}:=\{ w_{r,s} \: : \: 0< r, s, r+s < m \: \text{ and } \: br\equiv as \pmod m \},
\end{equation}
where $w_{r,s}:=w^m_{r,s}:=x^{r-m}y^{s-1}dy$ with functions $x=X/Z$ and $y=Y/Z$ on $F^m$.

For integers $1\leq i,j \leq m$, the automorphisms $\sigma_{i,j} : (x, y)\mapsto (\zeta_m^i x, \zeta_m^j y)$ of $F^m$ descend to the curve $C$. Note that some of them will be trivial, as $\Aut(F^m)$ contains $\mu_m^2$ and $\Aut(C)$ only contains $\mu_m$ in general. 
The automorphisms of $F^m$ act on the eigenbasis \eqref{diff} by the following rule:
\begin{equation}\label{eigen}
\sigma_{i,j}^*(w_{r,s})=\zeta_m^{ir+js}w_{r,s}.
\end{equation}
It follows that $\sigma_{i,j}$ acts on the dual space $V^*$ with eigenvalues 
\begin{equation}\label{eigend}
\{ \zeta_m^{-(ir+js)} \: : \: 0< r, s, r+s < m \: \text{ and } \: br\equiv as \pmod m \}.
\end{equation}
The eigenvalues of $\sigma_{i,j}$ acting on the tangent space \eqref{tangent} are computed from those on $V$ and $V^*$ using standard properties of wedge products.

\section{Algorithm}\label{s:algoresult}

\subsection{A genus $4$ example}\label{s:example}

Take $m=15$, $a=3$, $b=5$, and $\sigma=\sigma_{2,1}$.  Write $\zeta = \zeta_{15}$. By \eqref{genus}, the genus of $C=C^{15}_{3,5}$ is $4$. It follows from \eqref{tangent} that 
\[
T^{3}(C)=\left(\bigwedge\nolimits^{2} V\otimes \bigwedge\nolimits^{3} V^* \right) \oplus  \left(V\otimes \bigwedge\nolimits^{4} V^*\right).
\]
The eigenbasis of $V$ is 
$
L=L^{15}_{3,5}=\{ w_{3,5}, w_{3,10}, w_{6,5}, w_{9, 5} \}.
$
Using \eqref{eigen} and \eqref{eigend}, the eigenvalues of $\sigma$ acting on $V$ and $V^*$ are respectively 
\[
\{ \zeta^{11}, \zeta, \zeta^{2}, \zeta^{8} \} \quad \text{ and } \quad \{ \zeta^{4}, \zeta^{14}, \zeta^{13}, \zeta^{7} \}. 
\]
From properties of wedge products, we deduce that the eigenvalues of $\sigma$ acting on
\begin{itemize}
\item $\bigwedge^{2} V$ are $\{ \zeta^{12}, \zeta^{13}, \zeta^{4}, \zeta^{3}, \zeta^{9}, \zeta^{10} \}$, 
\item $\bigwedge\nolimits^{3} V^*$ are $\{ \zeta, \zeta^9, \zeta^{10}, \zeta^{4} \}$,
\item $\bigwedge^{4} V^*$ is $\zeta^{8}$.
 \end{itemize}
The eigenvalues of $\sigma$ acting on $\bigwedge^{2} V\otimes \bigwedge^{3} V^*$ and $V\otimes \bigwedge\nolimits^{4} V^*$ are therefore
\[
\{ \zeta^{13}, \zeta^{6}, \zeta^{7}, \zeta,
\zeta^{14}, \zeta^{8}, \zeta^{2},
\zeta^{5},
\zeta^{4}, \zeta^{12},
\zeta^{10}, \zeta^{3},
\zeta^{11}
 \}
 \quad \text{ and } \quad
 \{ \zeta^{4}, \zeta^{9}, \zeta^{10}, \zeta \},
\]
respectively. We conclude that $1$ is not an eigenvalue of $\sigma$ acting on $T^3(C)$. By the arguments of Section \ref{s:beauville}, $\AJ_C(\kappa_e)$ is torsion for any point $e\in C(\bar{\Q})$ fixed by $\sigma$.

\subsection{Algorithm and results}\label{s:algo}

We have implemented Beauville's method in SageMath \cite{sagemath}; see\cite{sagecode}. 
The algorithm takes as input a positive integer $m$ and outputs the list of all cyclic Fermat quotients of $F^m$ in the format $(m, a, b, m-a-b)$ together with their genus. 
If a cyclic Fermat quotient has an automorphism $\sigma_{i,j}$ for which $1$ is not an eigenvalue for the action on the tangent space \eqref{tangent}, then the algorithm also outputs $(m, a, b, i, j)$. Only the first such automorphism is listed, as the existence of an automorphism is what matters for our purpose.

We ran our algorithm for all positive integers $m$ up to $50$. For each even $m=2n$, the cyclic Fermat quotient $C^m_{1,n-1}$ has an automorphism for which $1$ is not an eigenvalue, hence the arguments of Section \ref{s:beauville} apply. However $(1, n-1) \sim_m (1, n)$ and thus by Proposition \ref{p:hyperelliptic} the curve $C^m_{1,n-1}$ is hyperelliptic. The only other examples of torsion Ceresa class we found were for the curves
\begin{equation}\label{output}
C^9_{1,2}, \: C^{12}_{1,3}, \: \text{ and } \: C^{15}_{1,5}. 
\end{equation}
The first two have genus $3$ and the third has genus $4$.
By Proposition \ref{p:hyperelliptic}, these are non-hyperelliptic.

\begin{proposition}\label{app:henn}
The curves $C^9_{1,2}, C^{12}_{1,3}$, and $C^{15}_{1,5}$ are respectively isomorphic over $\Q$ to the smooth projective plane curves with affine models
\[
\begin{array}{lll}
D^9 & : & y^3=x^4+x \\
D^{12} & : & y^3=x^4+1 \\
D^{15} & : & y^3=x^5+1. \\  
\end{array}
\]
\end{proposition}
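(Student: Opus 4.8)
The plan is to prove each isomorphism by exhibiting an explicit birational change of variables defined over $\Q$ between the affine model $F^m_{a,b}\colon y^m=(-1)^{a+b}x^a(1-x)^b$ of \eqref{cFq} and the given plane model $D^m$. Since a birational map between smooth projective curves is an isomorphism, and the curves named in the statement are precisely the smooth projective models of these affine equations, producing such a $\Q$-rational substitution (together with its inverse) suffices. Concretely, the three relevant affine models are $y^9=-x(1-x)^2$ for $C^9_{1,2}$, $\ y^{12}=x(1-x)^3$ for $C^{12}_{1,3}$, and $y^{15}=x(1-x)^5$ for $C^{15}_{1,5}$, where the signs come from $(-1)^{a+b}$.

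To locate the substitution I would match the two natural cyclic covers of $\PP^1$. On one side, $(x,y)\mapsto x$ realizes $C^m_{a,b}$ as a $\mu_m$-cover of $\PP^1$, with ramification over $x=0,1,\infty$ governed by $\gcd(m,a)$, $\gcd(m,b)$, $\gcd(m,a+b)$ exactly as in the genus computation \eqref{genus}. On the other side, writing $D^m\colon v^3=u^{k}+c$, the coordinate $u^{m/3}$ exhibits $D^m$ as a $\mu_m$-cover of $\PP^1$ (here $m/3=k$), with its own ramification profile over the branch locus. Comparing the two ramification partitions pins down a Möbius transformation $\phi$ of the base identifying $x$ with $u^{m/3}$; for instance in the case of $D^{12}$ one is forced to take $u^{4}=\phi(x)=(1-x)/x$, so that $v^3=u^4+1=1/x$. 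One then lifts $\phi$ to a map of curves: the relations $u^{m/3}=\phi(x)$ and $v^3=\phi(x)+1$ determine $u,v$ up to roots of unity, and the point is that the identity $y^m=(-1)^{a+b}x^a(1-x)^b$ lets one realize the required rational functions as honest $(m/3)$-th and cube powers of elements of $\Q(x,y)$, with $\Q$-rational roots.

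Carrying this out yields clean formulas. For $D^{12}$ the map is $(x,y)\mapsto(u,v)=\bigl(\tfrac{1-x}{y^{3}},\,\tfrac{1-x}{y^{4}}\bigr)$: using $y^{12}=x(1-x)^3$ one computes $u^4=(1-x)/x$ and $v^3=1/x$, whence $v^3=u^4+1$; the inverse is $y=u/v$ and $x=1/(u^4+1)$, so the map is birational over $\Q$. The same recipe gives $(u,v)=\bigl(-\tfrac{1-x}{y^3},\,\tfrac{1-x}{y^4}\bigr)$ for $D^9$ (from $y^9=-x(1-x)^2$, yielding $u^3=(1-x)/x$ and $v^3=u^4+u$) and $(u,v)=\bigl(-\tfrac{1-x}{y^3},\,-\tfrac{(1-x)^2}{y^5}\bigr)$ for $D^{15}$ (from $y^{15}=x(1-x)^5$, yielding $u^5=-1/x$ and $v^3=u^5+1$), with $x$ and $y$ recovered as explicit $\Q$-rational functions of $u,v$ in each case. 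Each substitution is therefore a birational isomorphism over $\Q$, and hence an isomorphism of smooth projective curves. As a sanity check one may instead observe that $(x,y)\mapsto(u,v)$ is a nonconstant morphism between smooth projective curves of the same genus $g\in\{3,4\}\geq 2$, which is automatically an isomorphism by Riemann--Hurwitz.

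I expect the only genuine obstacle to be the bookkeeping that makes the lift $\Q$-rational rather than merely $\overline{\Q}$-rational: after fixing the Möbius transformation, one must check that the relevant elements of $\Q(x,y)$ are perfect powers \emph{with roots already in the field}, and here the signs $(-1)^{a+b}$ and the precise exponents in $y^m=(-1)^{a+b}x^a(1-x)^b$ are exactly what make the needed cube (and $(m/3)$-th) roots available over $\Q$. Once the correct Möbius map is guessed from the ramification data, verifying the defining equation of $D^m$ and exhibiting the inverse are routine algebraic manipulations using the single relation $y^m=(-1)^{a+b}x^a(1-x)^b$.
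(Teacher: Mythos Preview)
Your proof is correct and the explicit substitutions all check out: for each $m\in\{9,12,15\}$ the map $(x,y)\mapsto(u,v)$ you write down is defined over $\Q$, lands on the target equation $D^m$, and has the stated $\Q$-rational inverse, so it is a birational isomorphism and hence an isomorphism of the smooth projective models.

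Your approach is genuinely different from the paper's, which simply invokes a computer algebra verification in Magma. Your argument is more transparent: by matching the $\mu_m$-cover structures on both sides via a M\"obius transformation of the base and then lifting, you produce closed-form isomorphisms that can be checked by hand, and you also explain \emph{why} the maps are $\Q$-rational (the sign $(-1)^{a+b}$ and the specific exponents make the needed roots available in $\Q(x,y)$). The paper's approach has the advantage of being effortless and easily reproducible from the posted code, but yours gives more insight and does not rely on trusting a black box. Either way the content is the same; your version would serve equally well as the proof.
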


\begin{proof}
Magma \cite{Magma} (see \cite{magmacode} for our code). 
\end{proof}

\begin{remark}
For each of the three curves $D^m$ above, it is known that $D^m$ is the unique curve over $\C$ of genus $g(D^m)$ with an automorphism of order $m$. For the genus 3 curves, see \cite[pg. 2]{lorenzogarcia}. For $D^{15}$, this follows from \cite[Table 4]{MSSV} which says that the locus of genus 4 curves admitting an automorphism subgroup of size $15$ is irreducible and 0-dimensional.
\end{remark}

\begin{theorem}\label{thm:AJtors}
Let $C$ be either $C^{9}_{1,2}, C^{12}_{1,3}$, or $C^{15}_{1,5}$, and let $e = (0,0)$. Then $\AJ_C(\kappa_e)$ is torsion.
\end{theorem}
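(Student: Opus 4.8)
The plan is to apply Beauville's criterion from Section~\ref{s:beauville} to each of the three curves in turn. Recall that for a given $C = C^m_{a,b}$ of genus $g$ it suffices to exhibit a single automorphism $\sigma = \sigma_{i,j}$ fixing the base point $e$ such that $1$ is not an eigenvalue of the induced action on the tangent space $T^{g-1}(C)$ described in \eqref{tangent}; the argument of Section~\ref{s:beauville} then confines $\AJ_C(\kappa_e)$ to the finite fixed-point locus of $(\sigma^*)^\vee$ on the complex torus $J^{g-1}(J)$, forcing it to be torsion. The fixed-point hypothesis is automatic here, because in the model \eqref{cFq} every automorphism $\sigma_{i,j}\colon (x,y)\mapsto(\zeta_m^i x,\zeta_m^j y)$ scales both coordinates and hence fixes $e=(0,0)$. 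The proof therefore reduces entirely to an eigenvalue computation on $T^{g-1}(C)$ for a well-chosen $\sigma_{i,j}$.

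For each curve I would run exactly the computation carried out in the genus $4$ worked example of Section~\ref{s:example}: first write down the eigenbasis $L^m_{a,b}$ of $V$ from \eqref{diff}, then read off the eigenvalues of $\sigma_{i,j}$ on $V$ and on $V^*$ from \eqref{eigen} and \eqref{eigend}, and finally propagate these through the wedge-and-tensor construction \eqref{tangent} to produce the full multiset of eigenvalues on $T^{g-1}(C)$, verifying that none of the resulting exponents is $\equiv 0 \pmod m$. The genus $4$ curve $C^{15}_{1,5}$ is handled directly by that example: since $(1,5)\sim_{15}(3,5)$ we have $C^{15}_{1,5}\cong C^{15}_{3,5}$, and Section~\ref{s:example} already exhibits the automorphism $\sigma_{2,1}$ with $1$ absent from the tangent eigenvalues. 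For the two genus $3$ curves the same but shorter computation applies; concretely $\sigma_{3,1}$ works for $C^9_{1,2}$ (recovering Beauville's order-$9$ automorphism) and $\sigma_{2,1}$ works for $C^{12}_{1,3}$, and in each case a direct check confirms that none of the roughly $g^2$ tangent exponents vanishes modulo $m$.

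Once $1$ has been ruled out as an eigenvalue, the conclusion is immediate from Section~\ref{s:beauville}. The main obstacle is not conceptual but combinatorial: for a given curve most choices of $(i,j)$ either produce an automorphism acting trivially on $C$ or leave $1$ among the tangent eigenvalues (for instance $\sigma_{1,1}$ leaves $1$ among the tangent eigenvalues of $C^{12}_{1,3}$), so one must search for an admissible $\sigma_{i,j}$ and then bookkeep accurately the eigenvalues assembled from the wedge products. This is precisely the search automated by the SageMath implementation, whose output \eqref{output} certifies that an admissible automorphism exists for each of $C^9_{1,2}$, $C^{12}_{1,3}$, and $C^{15}_{1,5}$.
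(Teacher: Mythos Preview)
Your proposal is correct and follows exactly the paper's approach: verify that $(0,0)$ is a fixed point of the descended $\mu_m$-action, then invoke Beauville's criterion via the eigenvalue check on $T^{g-1}(C)$, citing the algorithm output \eqref{output} (the paper's proof is in fact just this, stated tersely). One small slip: on the model \eqref{cFq} the descended automorphism $\sigma_{i,j}$ scales only the $y$-coordinate (cf.\ the sentence ``$\mu_m$ acts on $C=C^m_{a,b}$ by scaling the $y$-coordinate''), not both as you wrote; this does not affect the conclusion since $(0,0)$ is fixed either way, but you should also note, as the paper does, that $(0,0)$ is a \emph{non-singular} point of the affine model so that it indeed defines a point of the smooth projective curve $C$.
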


\begin{proof}
In each case, $(0,0)$ is a non-singular point of $C^m_{1,b}$ fixed by the $\mu_m$-action. 
The result then follows from the arguments of Section \ref{s:beauville} and the output \eqref{output} of our algorithm (see \cite{sagecode} for our code).
\end{proof}

\begin{remark}
Note that \[(-2)\cdot \{3,5,7\} \equiv \{ 9, 5, 1 \} \equiv \{ 1, 5, 9 \} \pmod{15},\] which implies that $(3,5)\sim_{15} (1,5)$, and thus $C^{15}_{1,5}\simeq C^{15}_{3,5}$. Theorem \ref{thm:AJtors} therefore recovers the example calculated by hand in Section \ref{s:example}.
\end{remark}

\begin{remark}
Proposition \ref{app:henn} and Theorem \ref{thm:AJtors} generalize Beauville's result \cite{beauville} for the curve $D^9$.
\end{remark}

Now that we have found more curves with this property, we can ask whether the method of Beauville--Schoen in \cite{beauvilleschoen} applies to show that the Ceresa cycle is torsion in the Griffiths group. For the genus $3$ curve $C^{12}_{1,3}$, we have:

\begin{theorem}\label{thm:Griffiths}
The class $\kappa(C^{12}_{1,3})$ is torsion in the Griffiths group $\Gr^2(J^{12}_{1,3})$.
\end{theorem}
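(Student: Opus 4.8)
The plan is to follow the strategy of Beauville and Schoen \cite{beauvilleschoen}, verifying that its hypotheses carry over from $D^9$ to $D^{12} \simeq C^{12}_{1,3}$. Write $J = J^{12}_{1,3}$, an abelian threefold of CM type, and recall that $\Gr^2(J) = \Gr_1(J)$ since $1$-cycles have codimension $g-1 = 2$. The first step is to isolate the transcendental part of the intermediate Jacobian. Up to isogeny, the Hodge decomposition $H^3(J) = (H^{3,0}\oplus H^{0,3}) \oplus (H^{2,1}\oplus H^{1,2})$ induces a splitting of $J^3(J)$ into the transcendental torus $J(M^{12})$ attached to the $(3,0)+(0,3)$ motive $M^{12}$ and the abelian variety $J_{\mathrm{alg}}$ attached to the level-one piece $H^{2,1}\oplus H^{1,2}$; let $\pi \colon J^3(J) \to J(M^{12})$ be the projection, so that $\ker\pi$ is isogenous to $J_{\mathrm{alg}}$. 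Since the Abel--Jacobi image of any algebraically trivial cycle lies in the algebraic part $J_a \subseteq J_{\mathrm{alg}}$, the composite $\overline{\AJ} := \pi\circ\AJ_C \colon \Gr^2(J) \to J(M^{12})$ kills algebraically trivial cycles up to torsion and hence descends to the Griffiths group.

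By Theorem \ref{thm:AJtors}, $\AJ_C(\kappa_e)$ is torsion, so its projection $\overline{\AJ}(\kappa)$ is torsion in $J(M^{12})$, and therefore $N\kappa$ lies in $\ker\overline{\AJ}$ for some integer $N>0$. The substance of the proof is to show that this forces $N\kappa$ — and hence $\kappa$ — to be torsion in $\Gr^2(J)$. This is exactly where the difficulty lies and does not follow formally: the kernel of $\overline{\AJ}$ is a Mumford-type ``deep'' part of the Griffiths group and may be very large, so torsion of the Abel--Jacobi image is a priori much weaker than torsion of the Griffiths class. Closing this gap is the entire content of \cite{beauvilleschoen}.

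The mechanism that closes the gap uses the extra rigidity coming from the automorphism together with the complex multiplication. The curve $C^{12}_{1,3}$ carries the $\mu_{12}$-automorphism $\sigma$ fixing $e = (0,0)$, so $\sigma_*\kappa_e = \kappa_e$ and $N\kappa$ is fixed by $\sigma$. Moreover $J$ is a CM abelian threefold, and — as recorded in the proof of Theorem \ref{thm:AJtors} via the eigenvalue computation of Section \ref{s:fermat} — the eigenvalue $1$ does not occur for the action of $\sigma$ on the tangent directions $H^{1,2}\oplus H^{0,3}$. These are precisely the structural inputs that Beauville and Schoen exploit for $D^9$: the CM decomposition separates the eigenlines of $M^{12}$ from those of the algebraic part, and the eigenvalue condition forces the $\sigma$-fixed part of $\ker\overline{\AJ}$ to be torsion, so that the $\sigma$-invariance of $N\kappa$ together with $\overline{\AJ}(N\kappa)=0$ yields that $N\kappa$ is torsion. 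Since $D^{12}$ is again a genus-$3$ cyclic Fermat quotient with CM Jacobian equipped with an order-$m$ automorphism satisfying the eigenvalue condition, their argument transfers without change.

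Concretely, the one step requiring genuine verification rather than citation is to re-run the eigenvalue analysis of Section \ref{s:example} for $C^{12}_{1,3}$ and confirm that the CM action of $\Q(\zeta_{12})$ separates the eigenlines of $M^{12}$ from those of the $(2,1)+(1,2)$ part, so that the finiteness of the relevant $\sigma$-fixed group is governed by the same computation that already produced Theorem \ref{thm:AJtors}. Granting this, the Beauville--Schoen argument applies verbatim and gives that $\kappa(C^{12}_{1,3})$ is torsion in $\Gr^2(J^{12}_{1,3})$.
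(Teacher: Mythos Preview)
Your proposal misidentifies what the Beauville--Schoen argument actually is. It is not a Hodge-theoretic refinement of the Abel--Jacobi eigenvalue computation: no decomposition of $J^3(J)$ into transcendental and algebraic parts, and no analysis of the $\sigma$-fixed subgroup of $\ker\overline{\AJ}$, appears in \cite{beauvilleschoen}. The mechanism is purely geometric. One forms the quotient threefold $W = J/\langle\sigma\rangle$, locates a point of $W$ where Reid's criterion shows the singularity is non-canonical, and deduces that $W$ is uniruled. Uniruledness of $W$ then forces the $\sigma$-invariant Ceresa class to vanish in $A_1(J^0)\otimes\Q$, where $J^0 = J\setminus F$ and $F$ is the locus of points with nontrivial $\langle\sigma\rangle$-stabilizer. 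The eigenvalues of $\sigma$ on $H^0(D,\Omega^1_D)^*$ enter only to check Reid's numerical inequality and to determine the dimension of each $\ker(1_J - \sigma^d)$, not to control any kernel of an Abel--Jacobi map. As stated, your argument does not close the gap between ``torsion Abel--Jacobi image'' and ``torsion in the Griffiths group''; that gap is the whole point, and it is closed by uniruledness, not by CM/eigenvalue bookkeeping.

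Moreover, the assertion that the argument ``transfers without change'' from $D^9$ to $D^{12}$ is false, and this is exactly what the paper has to work around. For $D^9$ the fixed locus $F$ is $0$-dimensional, so $A_1(J^0)\otimes\Q = A_1(J)\otimes\Q$ and one is done. For $D^{12}$, the eigenvalues of $\sigma$ on $H^0(D,\Omega^1_D)^*$ are $\zeta_{12}^2,\zeta_{12}^7,\zeta_{12}^{11}$, so $\ker(1_J-\sigma^6)$ is $1$-dimensional: the double cover $D^{12}\to E$, $(x,y)\mapsto(x^2,y)$, onto the elliptic curve $E:y^3=x^2+1$ embeds $E$ in $J$ via $\pi^*$, and $F$ contains finitely many translates of $E$. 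The uniruledness step therefore only yields that some multiple of $\kappa$ is algebraically equivalent to a $1$-cycle supported on these translates of $E$. One then needs the further observation that all such translates are algebraically equivalent to $E$, so a multiple of $\kappa$ is algebraically equivalent to an integer multiple of $E$; since $\kappa$ is homologically trivial, that integer must be $0$. This additional step, specific to $D^{12}$, is missing from your proposal.
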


\begin{proof}
We work with the projective model $D:= D^{12} : Y^3Z=X^4+Z^4$ of the curve $C^{12}_{1,3}$. Let $J:=\Jac(D^{12})$ and $\zeta:=i\zeta_3$ be our choice of primitive $12$-th root of unity. Consider the order $12$ automorphism $\sigma : (X,Y,Z)\mapsto (iX, \zeta_3Y,Z)$, which admits a fixed point $p:=(0,1,0)$. Denote by $W$ the quotient variety $J/\langle \sigma \rangle$ and by $q : J\lra W$ the corresponding quotient map. If $F\subset J$ denotes the set of elements with non-trivial stabilizer under the action of $\langle \sigma\rangle$, then $q(F)$ is the singular locus $S$ of $W$ \cite[Cor. of Lem.\ 2]{Fujiki}. Let $J^0:=J\setminus F$ and $W^0:=W\setminus S$. The eigenvalues of $\sigma$ acting on $H^0(D,\Omega^1_{D})^*$ are $\zeta^{2}, \zeta^{7}, \zeta^{11}$. It follows that for $0<d\neq 6<12$, $\ker(1_J-\sigma^d)$ is finite. However, $\ker(1_J-\sigma^6)$ is $1$-dimensional. We now give a description of the $1$-dimensional part of $F$. 

Denote by $\iota$ the involution $\sigma^6 : (x,y)\mapsto (-x,y)$ of $D$. Consider the elliptic curve $E : y^3 = x^2+1$ and observe that there is a double covering $\pi : D \lra E$ given by $(x,y)\mapsto (x^2,y)$, which ramifies at  four branch points.
Pullback of divisors yields a map $\pi^* : E \lra J$, which is injective since $\pi$ is ramified \cite[\S3, Lem.]{mumfordprym}. We will therefore identify $E$ with its image $\pi^*(E)$ inside $J$. Let $\lambda_E$ and $\lambda_J$ denote the canonical principal polarizations of $E$ and $J$ respectively, and let $\hat{\pi} : \hat{E}\lra \hat{J}$ be the dual morphism of $\pi : J\lra E$. Then we have $\pi^*=\lambda_J^{-1}\circ \hat{\pi} \circ \lambda_E$. Since $\pi^*$ is injective, so is $\hat{\pi}$. The kernels of $\pi$ and $\hat{\pi}$ have the same number of connected components \cite[Prop. 4.6]{Debarre}. It follows that $A:=\ker(\pi)$ is connected, hence an abelian subvariety of $J$, namely the Prym variety of $\pi$. The involution $\iota$ of $D$ induces an involution $\iota : J\lra J$, which acts as $+1$ on $E\subset J$ and as $-1$ on $A\subset J$. In fact, we have $A=\ker(1_J+\iota)^0=\im(1_J-\iota)$. 
There is an isomorphism $J\simeq (E\times A)/(\id, \pi^*)(E[2])$ \cite[\S3, Cor.\ 1 of Lem.]{mumfordprym}. 
In conclusion, we have $E=\ker(1_J-\iota)^0$ and $\ker(1_J-\iota)$ is the union of the translates of $E$ by representatives of $A[2]/\pi^*(E[2])$. 

Observe that $\sigma^7:(x,y)\mapsto (-ix,\zeta_3y)$ acts on $H^0(D,\Omega^1_D)^*$ with eigenvalues $\zeta, \zeta^2, \zeta^5$. Since the singular locus of $W$ occurs in codimension $2$ and $1+2+5=8<12$, Reid's criterion \cite[Thm.\ 3.1]{Reid} implies that the singularities $q(x)$ for $x\in \ker(1_J-\sigma^7)$ are non-canonical. It follows exactly as in \cite[Lem.\ 2]{beauvilleschoen} that the threefold $W$ is uniruled. Following the proof of \cite[Thm.]{beauvilleschoen} verbatim, we deduce that $\kappa(D)$ is trivial in the group $A_1(J^0)\otimes \Q$ of $1$-cycles on $J^0=J\setminus F$ with coefficients in $\Q$, modulo algebraic equivalence. That is, the Ceresa class is torsion in the open subset of $J$ where $\langle \sigma \rangle$ acts freely. In the case treated in \cite{beauvilleschoen}, the set $F$ is 0-dimensional, hence the latter statement is equivalent to the Ceresa class being torsion in $J$. In the present case, the locus $F$ has a $1$-dimensional part given by a finite union of translates of $E$. By \cite[Ex. \ 10.3.4]{fulton}, a multiple of $\kappa(D)$ is algebraically equivalent to a linear combination of these translates of $E$. But these translates are algebraically equivalent to $E$, hence a multiple of $\kappa(D)$ is algebraically equivalent to some multiple of $E$. Since $\kappa(D)$ is null-homologous, the latter multiple must be zero. In conclusion, $\kappa(D)$ is torsion in the Griffiths group $\Gr_1(J)$. 
\end{proof}

\subsection{Discussion and related work}\label{s:discussion}

The genus of $C^m_{a,b}$ is of size $O(m)$ as $m \to \infty$. The dimension of the tangent space \eqref{tangent} therefore grows as $O(m^3)$. Since there are $O(m)$ choices for eigenvalues, it becomes very unlikely that $1$ is not an eigenvalue for the action of $\sigma_{i,j}$ as $m \to \infty$. It therefore seems unlikely that Beauville's method alone will provide more examples of non-hyperelliptic curves with torsion Ceresa cycle under the complex Abel--Jacobi map. The output of our algorithm for large $m$ is consistent with this heuristic.

Our results can  be compared with recent work of Eskandari and Murty \cite{eskandarimurty}. They show that for every choice of base point, the image of the Ceresa cycle of $F^m$ under the complex Abel--Jacobi map is non-torsion, for any integer $m$ divisible by a prime greater than $7$. 
For an excellent survey of results concerning the non-triviality of Ceresa cycles of Fermat curves and their quotients see \cite{eskandarimurty2}. In \cite[Rem.\ 2, pg. 17]{eskandarimurty2}, Eskandari and Murty speculate whether their method can be extended to prove that for every choice of base point, the image of the Ceresa cycle of the cyclic Fermat quotient $C^p_{1,b}$ under the complex Abel--Jacobi map is non-torsion for all prime numbers $p>7$. Indeed, we ran our algorithm for quotients of Fermat curves of prime degree up to $71$ and found that Beauville's method applies to none of them. It is interesting that for composite values of $m$, we do find curves $C^m_{1,b}$ with torsion Ceresa cycle, but only when the prime factors $p \mid m$ all satisfy $p < 7$.

\section{$L$-functions}\label{s:Lfct}

We make explicit the Beilinson--Bloch conjecture for the Jacobians $J^{9}_{1,2}, J^{12}_{1,3}$, and $J^{15}_{1,5}$, which predicts the rank of the Griffiths group of $1$-cycles in terms of certain $L$-functions.  For $J^{12}_{1,3}$, we give new evidence for this conjecture. 

\subsection{The Beilinson--Bloch conjecture}\label{sec:BB}

Let $C$ be as in Section \ref{s:ceresa}, defined over a number field $K$. The conjecture of Beilinson and Bloch \cite{bloch} predicts the equality
\begin{equation}\label{bb}
\rank_{\Z} \CH^{g-1}(J)_0 = \ord_{s=g-1} L(H^{2g-3}(J)/K, s),
\end{equation}
where $L(H^{2g-3}(J)/K, s)$ denotes the $L$-function of the compatible system of $\ell$-adic Galois representations coming from the cohomology of the Jacobian  $J$ in degree $2g-3$. Both the finite generation of $\CH^{g-1}(J)_0$ and the analytic continuation of the $L$-function are not known in general, hence are implicit in the conjecture.

Suppose there is a motivic decomposition $H^{2g-3}(J)=I\oplus M$, where $M$ is pure of type $(g, g-3)+(g-3,g)$ and $I$ has type $(g-1,g-2) + (g-2,g-1)$.
Then  the subgroup of algebraically trivial cycles $\CH^{g-1}(J)_{\alg}$ maps trivially under the \'etale Abel--Jacobi map to $H^1(\Gal(\bar K/K), M_{\mathrm{et}}(g-1))$ \cite{bloch}.
Hence, it is natural to expect the equality
\begin{equation}\label{bb2}
\rank_{\Z} \Gr^{g-1}(J) = \ord_{s=g-1} L(M/K, s),
\end{equation}
where $\Gr^{g-1}(J):=\CH^{g-1}(J)_0/\CH^{g-1}(J)_{\alg}$ is the Griffiths group. 
The equality \eqref{bb2} is Bloch's ``Son of Recurring Fantasy''. Even if $H^{2g-3}(J)$ does not have such a decomposition, Bloch defines a coniveau filtration $F^i \subset H^{2g-3}(J)$ and conjectures that the rank of $\Gr^{g-1}(J)$ is equal to the order of vanishing of $L(F^0/F^{g-2},s)$ at its central point. See \cite{bloch85} for more details.

Next, we recall an explicit description of the relevant $L$-functions in our examples.

\subsection{Jacobi sums}\label{s:jacobisum}
Fix $m, a$, and $b$ as in Section \ref{s:fermat}, and let $C = C^m_{a,b}$ and $J =J^{m}_{a,b}$. For any $n$ dividing $m$, there is a map $F^m \lra F^{m/n}$ given by $(x,y)\mapsto (x^n,y^n)$. Together with $f = f_{a,b}^m$, it induces a map of Jacobians $\Jac(F^{m/n}) \lra J$. Define $J^{\old}$ to be the subvariety of $J$ generated by the images of the above maps for all proper divisors $n$ of $m$. Define the {\it new part} $A = A^m_{a,b}:=J/J^{\old}$, a quotient of $J^m_{a,b}$.
Let
\[
H^{m}_{a,b}:=\{ h\in (\Z/m\Z)^\times \colon \langle ha \rangle+\langle hb \rangle < m \},
\]
where $\langle \cdot \rangle$ denotes the unique representative in $\{ 1, \ldots, m-1 \}$ of the residue class modulo $m$, and define
\[
W^m_{a,b}:=\{ h\in (\Z/m\Z)^\times \colon hH_{a,b}^m = H^m_{a,b} \}.
\]
The abelian variety $A$ has CM by $\Z[\zeta_m]$ with CM type equal to
$H^m_{a,b} \subset \Gal(\Q(\zeta_m)/\Q)$.
 Moreover, $A$ is simple if and only if $W^m_{a,b}=\{ 1 \}$. In general, $A$ is isogenous over $\C$ to $\vert W^m_{a,b}\vert$ isomorphic simple factors each having CM by the subfield $K^m_{a,b}$ of $\Q(\zeta_m)$ fixed by $W^m_{a,b}$ and CM type given by
 $H^m_{a,b}/W_{a,b}^m \subset \Gal(K^m_{a,b}/\Q)$ (see for instance \cite{koblitzrohrlich}). 

For each prime ideal $\p$ of $\Z[\zeta_m]$, let $\F_\p$ denote the residue field $\Z[\zeta_m]/\p$. If $\p\nmid m$, then the $m$-th power residue symbol $\chi_{\mathfrak{p}} : \F_{\mathfrak{p}}^\times \lra \mu_m \subset \Q(\zeta_m)$ is uniquely determined by the congruence  $\chi_{\mathfrak{p}}(z)=z^{\frac{N(\mathfrak{p})-1}{m}} \pmod \p$.
Define the Hecke character (defined on ideals of $\Z[\zeta_m]$ prime to $m$) 
\[\tau^m_{a,b}=\tau_{a,b} : I_{\Q(\zeta_m)}(m) \lra \Q(\zeta_m)^\times\] whose value on prime ideals $\p$ is the Jacobi sum
\[
\tau_{a,b}(\mathfrak{p}):=-\sum_{z\in \F_\mathfrak{p} \backslash \{0,1\}} \chi_{\mathfrak{p}}^a(z)\chi_{\mathfrak{p}}^b(1-z).
\]
The algebraic integer $\tau_{a,b}(\p)$ has absolute value $N(\mathfrak{p})^{1/2}$ in every complex embedding \cite{weiljacobi}. 

Fix a primitive character $\chi : \mu_m \lra \C^\times$ and define $\tau^{m,\chi}_{a,b}:=\tau^\chi_{a,b}:=\chi \circ \tau_{a,b} : I_{\Q(\zeta_m)}(m) \lra \C^\times$. This is a Grossencharacter for $\Q(\zeta_m)$ whose infinity type is given by the CM type of $A$. 
The complex $L$-function $L(A/\Q,s)$, attached to the cohomology of $A$ in degree $1$, has degree $\varphi(m)$ and we have the equality 
\[
L(A/\Q,s)=L(\tau^\chi_{a,b}/\Q(\zeta_m), s),
\]
where the latter is the Hecke $L$-function attached to the Grossencharacter $\tau^\chi_{a,b}$ \cite{weiljacobi}.  In particular, it has analytic continuation and satisfies a functional equation \cite{TateThesis}.

\begin{remark}
The equality of $L$-functions is independent of the choice of $\chi$.  Indeed,  the polynomial 
$
\prod_{\mathfrak{p}\mid p} (1-\tau_{a,b}(\mathfrak{p})T^{f_\mathfrak{p}})
$
has integer coefficients. 
\end{remark}

\subsection{$L$-function calculations}\label{subsec:Lfunctions}

Inspired by Bloch's $L$-function calculations \cite{bloch} for the Klein quartic $C^{7}_{2,1}$, we compute the order of vanishing of the $L$-function relevant for the algebraic equivalence class of the Ceresa cycle in the case of Beauville's curve $C^9_{1,2}$, as well as the curves $C^{12}_{1,3}$ and $C^{15}_{1,5}$. 
We refer to \cite{magmacode} for the Magma code used in this section.

\subsubsection{The $L$-function for $C^{9}_{1,2}$}

Fix a character $\chi : \mu_9 \lra \C^\times$ by sending $\zeta_9$ to $e^{\frac{2\pi i}{9}}$ and identify as usual $\Gal(\Q(\zeta_9)/\Q)$ with $(\Z/9\Z)^\times=\{ 1,2,4,5,7,8 \}$. We order the complex embeddings by pairs of complex conjugates as $\{ (1,8), (2,7), (4,5) \}$. In this subsection, $C = C^9_{1,2}$.

The morphism $f : F^9 \lra C$ is given by $(x \colon y \colon 1) \mapsto (-x^9, xy^2)$. We have $V:=H^{1,0}(C)$ and a basis of $f^*V$ is (in the notation of Section \ref{s:fermat})
\[
L^9_{1,2} = \{ w_{1,2}, w_{2,4}, w_{5,1} \}.
\]
The automorphism $\sigma(x\colon y\colon 1) = (\zeta_9^5 x \colon \zeta_9^7y \colon 1)$ of $F^9$ descends to an order $9$ automorphism of $C$. We have 
\begin{equation}\label{eigenC9}
\sigma^* w_{1,2}=\zeta_9 w_{1,2}, \qquad \sigma^* w_{2,4}=\zeta_9^2 w_{2,4}, \qquad \sigma^* w_{5,1}=\zeta_9^5 w_{5,1}. 
\end{equation}

Let $J = J^9_{1,2}$.  For each prime $\ell$, the $\ell$-adic cohomology group $H^1_{\et}(J_{\bar\Q}, \bar\Q_\ell)$ inherits an action of $\mu_9\subset \Aut(C)$ and admits an eigenspace decomposition
\[
H^1_{\et}(J_{\bar \Q}, \bar\Q_\ell)= \bigoplus_{\phi} H^{\phi},
\]
where the sum is over primitive characters $\phi : \mu_9 \lra \C^\times$ and $H^\phi$ is the corresponding eigenspace.
This is a direct sum of $\Gal(\bar\Q/\Q(\zeta_9))$-representations, and the decomposition carries an action of $\Gal(\Q(\zeta_9)/\Q)=(\Z/9\Z)^\times$ by $a(H^\phi)=H^{\phi^a}$. It follows from \eqref{eigenC9} that 
\begin{equation}\label{eq:decom10}
H^{1,0}(C)=H^{\chi}\oplus H^{\chi^2} \oplus H^{\chi^5},
\end{equation}
after base change to $\C$.

\begin{remark}
We have $J = J^9_{1,2}=A^9_{1,2}$, which is a simple abelian variety with CM by $\Z[\zeta_9]$. The decomposition \eqref{eq:decom10} is reflected by the fact that the CM type of $J$ is
 $H^9_{1,2}=\{ 1,2,5 \}$. 
\end{remark}

Let $H$ denote the motive $H^1(J)$.
The $(3,0)+(0,3)$ type submotive of $H^3(J)=\bigwedge^3 H$ is 
\begin{equation}\label{M9}
M = M^9_{1,2}:=\left( H^{\chi}\otimes H^{\chi^2} \otimes H^{\chi^5} \right)\oplus \left( H^{\chi^8}\otimes H^{\chi^7} \otimes H^{\chi^4} \right).
\end{equation}
The motive $M$ is defined over the maximal totally real subfield $\Q(\zeta_9)^+$ of $\Q(\zeta_9)$. The Beilinson--Bloch conjecture \eqref{bb2} in this case predicts that
\begin{equation}\label{bb9}
\rank_{\Z} \Gr^2(J_{\Q(\zeta_9)^+}) = \ord_{s=2} L(M/\Q(\zeta_9)^+, s).
\end{equation}

\begin{proposition}\label{prop:L9}
The order of vanishing of $L(M/\Q(\zeta_9)^+, s)$ at $s=2$ is equal to $1$.
\end{proposition}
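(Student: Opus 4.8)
The plan is to recognize $L(M/\Q(\zeta_9)^+,s)$ as a single Hecke $L$-function over $\Q(\zeta_9)$, determine the parity of its order of vanishing at the center from the sign of the functional equation, and then pin down the exact order by an explicit (machine) computation.

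First I would identify the $L$-function. The two summands of $M$ in \eqref{M9} are rank-one submotives defined over $\Q(\zeta_9)$, interchanged by complex conjugation, which generates $\Gal(\Q(\zeta_9)/\Q(\zeta_9)^+)$. By the description of the Jacobi-sum Grossencharacters in Section \ref{s:jacobisum}, the Frobenius eigenvalue of the summand $H^{\chi}\otimes H^{\chi^2}\otimes H^{\chi^5}$ at a prime $\p\nmid 9$ is the product of the three Jacobi-sum values attached to the eigenspaces $H^{\chi},H^{\chi^2},H^{\chi^5}$; this defines a Grossencharacter $\psi$ of $\Q(\zeta_9)$ with $|\psi(\p)|=\N(\p)^{3/2}$ and infinity type matching the Hodge type $(3,0)$. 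Since $M$ is the induction of $\psi$ from $\Q(\zeta_9)$ to $\Q(\zeta_9)^+$ (it is a genuine $2$-dimensional representation because $\psi\neq\bar\psi$ have distinct Hodge types), Artin formalism gives $L(M/\Q(\zeta_9)^+,s)=L(\psi/\Q(\zeta_9),s)$. As $M$ has weight $3$, the relevant central point is $s=2$, so it suffices to compute $\ord_{s=2}L(\psi/\Q(\zeta_9),s)$.

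Next I would exploit self-duality. The motive $M\subset H^3(J)$ is self-dual under Poincar\'e duality ($M^\vee\cong M(3)$, the $(3,0)$ and $(0,3)$ parts pairing with each other), so by Hecke--Tate theory $L(\psi/\Q(\zeta_9),s)$ has meromorphic continuation and a functional equation relating $s$ and $4-s$ with a global root number $w\in\{\pm1\}$. When $w=-1$ the central value vanishes and the order of vanishing is odd, hence at least $1$. I would compute $w$ from the archimedean local root number (governed by the infinity type) together with the local root numbers at the primes above $3$; the expectation is $w=-1$, giving $\ord_{s=2}L\geq1$ with odd order.

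The remaining and hardest step is to rule out orders $3,5,\dots$. The sign alone only forces odd parity, so excluding higher odd order relies on evaluating $L(\psi,2)=0$ and $L'(\psi,2)\neq0$ numerically with Magma's Hecke $L$-function routines \cite{magmacode}. This is the main obstacle, since the non-vanishing of the central derivative is certified only to the attainable numerical precision rather than in closed form; in practice the computation yields a value of $L'(\psi,2)$ bounded away from zero, which combined with the parity constraint gives $\ord_{s=2}L(M/\Q(\zeta_9)^+,s)=1$.
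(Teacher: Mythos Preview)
Your proposal is correct and follows essentially the same strategy as the paper: identify $L(M/\Q(\zeta_9)^+,s)$ with the Hecke $L$-function of a single Grossencharacter $\psi$ of $\Q(\zeta_9)$, use the functional equation to get sign $-1$ (hence odd order of vanishing), and finish with a Magma computation showing the order is exactly $1$. The paper is somewhat more explicit at two points you gloss over: it writes $\psi=\tau_{1,2}^{\chi}\tau_{2,4}^{\chi}\tau_{5,1}^{\chi}$ and computes its infinity type to be $[[3,0],[2,1],[1,2]]$ (not just $(3,0)$ at a single place, since the Galois twists contribute differently at each archimedean embedding), and it pins down $\psi$ inside Magma's finite list of characters of the right infinity type and conductor by matching local Euler factors at a handful of small primes before invoking the $L$-function routines.
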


\begin{proof}
From the discussion in Section \ref{s:jacobisum}, we have $L(H^{\chi}/\Q(\zeta_9),s) = L(\tau^\chi_{1,2},s)$, where $\tau^\chi_{1,2}$ has infinity type $[[1,0], [1,0], [0,1]]$. The conductor of this character has norm $3^4$ (see \cite[Table pg. 59]{cainthesis}).
We have the equalities of $L$-functions
\[
L(M/\Q(\zeta_9)^+, s)=L(\tau_{1,2}^{\chi}\tau_{1,2}^{\chi^2}\tau_{1,2}^{\chi^5}/\Q(\zeta_9), s)=L(\tau_{1,2}^{\chi}\tau_{2,4}^{\chi}\tau_{5,1}^{\chi}/\Q(\zeta_9), s), 
\]
where the latter is 
defined as 
\[
L(\tau_{1,2}^{\chi}\tau_{2,4}^{\chi}\tau_{5,1}^{\chi}/\Q(\zeta_9), s)=\prod_{\mathfrak{p} \: \triangleleft \: \Q(\zeta_9)} (1-\tau_{1,2}^{\chi}\tau_{2,4}^{\chi}\tau_{5,1}^{\chi}(\mathfrak{p})N(\mathfrak{p})^{-s})^{-1}=\prod_p L_{p}(s).
\]
If $\p$ is a prime of $\Q(\zeta_9)$ above $p \neq 3$, the factor $L_p(s)$ can be described as follows:

\vspace{2mm}
\noindent
$p\equiv 1\pmod 9:$ 
\begin{multline*}
(1-\tau_{1,2}^{\chi}\tau_{2,4}^{\chi}\tau_{5,1}^{\chi}(\mathfrak{p})p^{-s})^{-1}(1-\tau_{2,4}^{\chi}\tau_{4,8}^{\chi}\tau_{1,2}^{\chi}(\mathfrak{p})p^{-s})^{-1} 
(1-\tau_{4,8}^{\chi}\tau_{8,7}^{\chi}\tau_{2,4}^{\chi}(\mathfrak{p})p^{-s})^{-1} \\ (1-\tau_{5,1}^{\chi}\tau_{1,2}^{\chi}\tau_{7,5}^{\chi}(\mathfrak{p})p^{-s})^{-1}(1-\tau_{7,5}^{\chi}\tau_{5,1}^{\chi}\tau_{8,7}^{\chi}(\mathfrak{p})p^{-s})^{-1}(1-\tau_{8,7}^{\chi}\tau_{7,5}^{\chi}\tau_{4,8}^{\chi}(\mathfrak{p})p^{-s})^{-1}
\end{multline*}

\noindent
$p\equiv 2,5 \pmod 9: (1+p^9 p^{-6s})^{-1}$ 

\noindent
$p\equiv 8 \pmod 9: 
(1-\tau_{1,2}^{\chi}\tau_{2,4}^{\chi}\tau_{5,1}^{\chi}(\mathfrak{p})p^{-2s})^{-1}(1-\tau_{4,8}^{\chi}\tau_{8,7}^{\chi}\tau_{2,4}^{\chi}(\mathfrak{p})p^{-2s})^{-1}(1-\tau_{7,5}^{\chi}\tau_{5,1}^{\chi}\tau_{8,7}^{\chi}(\mathfrak{p})p^{-2s})^{-1}
$

\noindent
$p\equiv 4,7 \pmod 9: (1-\tau_{1,2}^{\chi}(\tau_{2,4}^{\chi})^2(\mathfrak{p})p^{-3s})^{-1}(1-(\tau_{1,2}^{\chi})^2\tau_{2,4}^{\chi}(\mathfrak{p})p^{-3s})^{-1}.$

\vspace{2mm}
The infinity type of
\[
\begin{cases}
\tau_{1,2}^\chi & \text{is } [[1,0], [1,0], [0,1]] \\
\tau_{2,4}^\chi & \text{is } [[1,0], [0,1], [0,1]] \\
\tau_{5,1}^\chi & \text{is } [[1,0], [1,0], [1,0]].
\end{cases}
\]
Thus, the infinity type of $\tau_{1,2}^\chi \tau_{2,4}^\chi \tau_{5,1}^\chi$ is $[[3,0],[2,1],[1,2]]$.

We can calculate the local factors of $L(M/\Q(\zeta_9)^+, s)$ in SageMath.  By comparing local $L$-factors at all primes above $p\in \{ 7,11,13,17,19,37 \}$, we can identify the Grossencharacter $\tau_{1,2}^\chi \tau_{2,4}^\chi \tau_{5,1}^\chi$ with one of the finitely many Hecke characters in Magma with appropriate infinity type and conductor:\footnote{Warning to the reader: Magma's indexing of Hecke characters uses randomness, so in other instantiations, the desired Hecke character may be a different product of the generators DG.$i$.  One must locate the correct Hecke character each time one reopens Magma.}
 \begin{lstlisting}
 K:=CyclotomicField(9);
 I:=Factorization(3*IntegerRing(K))[1][1]^4;
 HG:=HeckeCharacterGroup(I);
 DG:=DirichletGroup(I);
 GR:=Grossencharacter(HG.0,DG.1*DG.2*DG.3,[[3,0],[2,1],[1,2]]);
 \end{lstlisting}
We evaluated the $L$-function of this Grossencharacter at its center using the available tools in Magma (see \cite{magmacode} for our code). The sign in the functional equation turns out to be $-1$ and the order of vanishing at the center turns out to be $1$. 
\end{proof}

\begin{remark}
 Beauville and Schoen \cite{beauvilleschoen} proved that $\kappa(C^9_{1,2})$ is torsion, but the order of vanishing of the $L$-function is 1 and not 0. What null-homologous 1-cycle on $J^{9}_{1,2}$ accounts for the vanishing of $L(M^9_{1,2}/\Q(\zeta_9)^+,2)$, as predicted by Beilinson--Bloch?
\end{remark}

\subsubsection{The $L$-function of $C^{12}_{1,3}$}\label{s:C12}

The computation in this case is somewhat similar, except that the Jacobian is not simple this time. In this subsection $C = C^{12}_{1,3}$ and $J = J^{12}_{1,3}$.  Let $\zeta_3$ be a third root of unity and define $\zeta_{12}=i\zeta_3$.
Fix a primitive character $\chi : \mu_{12} \lra \C^\times$ by sending $\zeta_{12}$ to $e^{\frac{\pi i}{6}}$. Identify as usual $\Gal(\Q(\zeta_{12})/\Q)$ with $(\Z/12\Z)^\times=\{ 1,5,7,11 \}$ and order this set by conjugate pairs as $\{ (1,11), (5,7) \}$. Note that $\Q(\zeta_{12})=\Q(\zeta_3, i)$ is biquadratic. 

Let $V = H^{1,0}(C)$ as before. A basis for $f^* V$ is  
\begin{equation}\label{L12}
L^{12}_{1,3} = \{ w^{12}_{1,3}, w^{12}_{2,6}, w^{12}_{5,3} \}.
\end{equation}
The automorphism $\sigma = \sigma_{1,4}$ of $F^{12}$ descends to an order 12 automorphism of $C$. We have 
\begin{equation}\label{eigenC12}
\sigma^*w^{12}_{1,3}=\zeta_{12} w^{12}_{1,3}, \qquad \sigma^*w^{12}_{2,6}=\zeta_{12}^2 w^{12}_{2,6}, \qquad \sigma^*w^{12}_{5,3}=\zeta_{12}^5 w^{12}_{5,3}.
\end{equation}

For each prime $\ell$, the first $\ell$-adic cohomology $H^1_{\et}(J_{\bar \Q}, \bar\Q_\ell)$ inherits an action of $\mu_{12}\subset \Aut(C)$ and admits an eigenspace decomposition
\[
H^1_{\et}(J_{\bar \Q}, \bar\Q_\ell)= \bigoplus_{\phi} H^{\phi},
\]
where the sum is over (not necessarily primitive) characters $\phi : \mu_{12} \lra \C^\times$ and $H^\phi$ is the corresponding eigenspace.
This decomposition carries an action of $\Gal(\Q(\zeta_{12})/\Q)=(\Z/12\Z)^\times$ by $a(H^\phi)=H^{\phi^a}$. It follows from \eqref{eigenC12} that 
\[
H^{1,0}(C)=H^{\chi}\oplus H^{\chi^2}\oplus H^{\chi^5},
\]
after base change to $\C$.

In general, the differential forms $w^m_{r,s}$ arise from the new part of the Jacobian of $C^m_{r,s}$ if and only if $\gcd(m,r,s)=1$. 
In particular, $w^{12}_{1,3}$ and $w^{12}_{5,3}$ of \eqref{L12} arise from the $2$-dimensional abelian variety $A:=A^{12}_{1,3}$ over $\Q$, while $2\cdot w^{12}_{2,6}$ arises as the pullback of the form $w^6_{1,3}$ on $F^6$ via the map $F^{12}\lra F^6$, $(x,y)\mapsto (x^2, y^2)$. Note that $w^6_{1,3}$ is the regular differential form of the elliptic curve $E:=J^{6}_{1,3}=A^6_{1,3}$ over $\Q$.
We have an isogeny $J\simeq A \times E$ over $\Q$.

The elliptic curve $E$ has CM by the imaginary quadratic field $\Q(\zeta_6)=\Q(\zeta_3)$ with CM type $\{ 1 \}$. The $L$-function of $E$ is given by the $L$-function of a Grossencharacter for $\Q(\zeta_3)$ of infinity type $[1,0]$, namely the Jacobi sum $\tau^{6,\chi}_{1,3}$. The Jacobi sum $\tau^{12,\chi^2}_{1,3}=\tau^{12,\chi}_{2,6}$ is a Grossencharacter for $\Q(\zeta_{12})$ and satisfies 
\[
\tau^{12,\chi}_{2,6}=\tau^{6,\chi}_{1,3}\circ \Norm^{\Q(\zeta_{12})}_{\Q(\zeta_3)} : I_{\Q(\zeta_{12})}(6)\lra \C^\times.
\]
Since $\Gal(\Q(\zeta_{12})/\Q(\zeta_3))=\{ 1,7 \}$, it follows that $\tau^{12,\chi}_{2,6}$ has infinity type $[[1,0],[0,1]]$. The submotive $H^{\chi^2}\oplus H^{\chi^{10}}$ of $H$ is defined over $\Q(i)$ and arises from the elliptic curve $E$.
Denoting by $$\xi : \Gal(\Q(\zeta_{12})/\Q(\zeta_3))=\Gal(\Q(i)/\Q)\lra \C^\times$$ the non-trivial character, we observe that 
\begin{multline*}
L(H^{\chi^2}/\Q(\zeta_{12}),s)=L((H^{\chi^2}\oplus H^{\chi^{10}})/\Q(i),s)=L(E/\Q(i),s) \\ =L(E/\Q, s)L(E/\Q, \xi, s) =L(\tau^{6,\chi}_{1,3}/\Q(\zeta_3),s)L(\tau^{6,\chi}_{1,3}/\Q(\zeta_3),\xi,s)=L(\tau^{12, \chi}_{2,6}/\Q(\zeta_{12}),s).
\end{multline*}

The abelian variety $A=A^{12}_{1,3}$ is isogenous over $\C$ to the product of two isomorphic elliptic curves with CM by $\Q(i)$ and CM type $\{ 1\}$. We see that $H^{1,0}(A)=H^{\chi}\oplus H^{\chi^5}$ is defined over $\Q(i)$ and $H^{12}_{1,3}=\{ 1,5\}=\Gal(\Q(\zeta_{12})/\Q(i))$. We have 
\[
L(A/\Q,s)=L(\tau^{12,\chi}_{1,3}/\Q(\zeta_{12}), s),
\] 
and the infinity type of $\tau^{12,\chi}_{1,3}$ is $[[1,0],[1,0]]$.
The motive
\[ 
M(A):=H^{2,0}(A)\oplus H^{0,2}(A)=(H^{\chi}\otimes H^{\chi^5})\oplus (H^{\chi^{11}}\otimes H^{\chi^7})
\] 
is defined over $\Q$ and $L(M(A)/\Q,s)=L(\tau^{12,\chi}_{1,3}\tau^{12,\chi}_{5,3}/\Q(i),s)$.

Let $H$ denote the motive $H^1(J)$.
The motive of interest is the $(3,0)+(0,3)$ part of $\bigwedge^3 H$,
\[
M=M^{12}_{1,3}=(H^{\chi}\otimes H^{\chi^2}\otimes H^{\chi^5})\oplus (H^{\chi^{11}}\otimes H^{\chi^{10}}\otimes H^{\chi^7}),
\]
which is defined over $\Q(\zeta_{12})^+ = \Q(\sqrt{3})$. 
The Beilinson--Bloch conjecture \eqref{bb2} predicts that
\begin{equation}
\rank_{\Z} \Gr^2(J_{\Q(\sqrt{3})}) = \ord_{s=2} L(M/\Q(\sqrt{3}), s).
\end{equation}

\begin{proposition}\label{prop:L12}
We have 
$
L(M/\Q(\sqrt{3}), 2)=L((M(A)\otimes H^1(E))/\Q, 2) \neq 0.$
\end{proposition}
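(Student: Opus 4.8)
The plan is to establish the equality of $L$-functions by unwinding the definition of the motive $M=M^{12}_{1,3}$ in terms of the eigenspace decomposition and then recognizing the tensor factors as the motives $M(A)$ and $H^1(E)$ already identified earlier in this subsection; the non-vanishing at $s=2$ is then a numerical computation in Magma, which I would defer to the computational appendix. First I would observe that the two tensor factors in
\[
M=(H^{\chi}\otimes H^{\chi^2}\otimes H^{\chi^5})\oplus (H^{\chi^{11}}\otimes H^{\chi^{10}}\otimes H^{\chi^7})
\]
can be regrouped as $(H^{\chi}\otimes H^{\chi^5})\otimes H^{\chi^2}$ and its complex conjugate. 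The point is that $H^{\chi}\otimes H^{\chi^5}$ together with $H^{\chi^{11}}\otimes H^{\chi^7}$ is exactly the motive $M(A)=H^{2,0}(A)\oplus H^{0,2}(A)$ identified above, while $H^{\chi^2}$ (with its conjugate $H^{\chi^{10}}$) is the piece arising from the elliptic curve $E$, for which we established $L(H^{\chi^2}/\Q(\zeta_{12}),s)=L(\tau^{12,\chi}_{2,6}/\Q(\zeta_{12}),s)$ and which descends to $H^1(E)$.

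Concretely, I would translate everything into Hecke $L$-functions via the Jacobi sum formalism of Section \ref{s:jacobisum}. Over $\Q(\zeta_{12})$ the motive $M$ corresponds to the Grossencharacter $\tau^{12,\chi}_{1,3}\tau^{12,\chi}_{2,6}\tau^{12,\chi}_{5,3}$, whose infinity type is the sum $[[1,0],[1,0]]+[[1,0],[0,1]]+([[1,0],[1,0]]$ twisted appropriately$)$, giving the $(3,0)+(0,3)$ type. Using the already-recorded factorizations $L(M(A)/\Q,s)=L(\tau^{12,\chi}_{1,3}\tau^{12,\chi}_{5,3}/\Q(i),s)$ and $L(H^1(E)/\Q(\zeta_{12}),s)=L(\tau^{12,\chi}_{2,6}/\Q(\zeta_{12}),s)$, the product $\tau^{12,\chi}_{1,3}\tau^{12,\chi}_{2,6}\tau^{12,\chi}_{5,3}$ factors as the restriction of $(\tau^{12,\chi}_{1,3}\tau^{12,\chi}_{5,3})\cdot\tau^{12,\chi}_{2,6}$, which is precisely the Grossencharacter attached to the tensor product motive $M(A)\otimes H^1(E)$. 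The descent of $M$ from $\Q(\zeta_{12})$ to $\Q(\sqrt3)=\Q(\zeta_{12})^+$, versus the descent of $M(A)\otimes H^1(E)$ from $\Q(\zeta_{12})$ all the way to $\Q$, must be matched carefully: one checks that inducing the Hecke character down to $\Q(\sqrt3)$ and then comparing with the full induction to $\Q$ gives the same Euler product, because $M(A)$ is already defined over $\Q$ and tensoring with $H^1(E)$ over $\Q$ reassembles the Galois conjugates $\chi\leftrightarrow\chi^5$ and $\chi^2\leftrightarrow\chi^{10}$ into the $\Q(\sqrt3)$-rational motive $M$.

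\emph{The main obstacle} I anticipate is bookkeeping the field-of-definition descents: verifying that the $\Q(\sqrt3)$-motive $M$ and the $\Q$-motive $M(A)\otimes H^1(E)$ yield identical $L$-functions requires matching Euler factors prime-by-prime across the biquadratic field $\Q(\zeta_{12})=\Q(\zeta_3,i)$, keeping track of which Jacobi sums are defined over which subfield and how the norm map $\Norm^{\Q(\zeta_{12})}_{\Q(\zeta_3)}$ interacts with the twist by $\xi$. Once the motivic identity $M\cong\mathrm{Res}_{\Q(\sqrt3)/\Q}^{-1}\big((M(A)\otimes H^1(E))\big)$ is pinned down at the level of Grossencharacters, the equality $L(M/\Q(\sqrt3),2)=L((M(A)\otimes H^1(E))/\Q,2)$ is formal, and the non-vanishing is then read off from the Magma evaluation of the Rankin–Selberg-type $L$-value $L((M(A)\otimes H^1(E))/\Q,2)$, which I would confirm is nonzero by computing sufficiently many Euler factors and invoking the functional equation (with trivial sign) to pin down the central value.
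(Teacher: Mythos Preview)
Your proposal is correct and follows essentially the same route as the paper: regroup $M$ as $(H^\chi\otimes H^{\chi^5})\otimes H^{\chi^2}$ plus its conjugate, identify the factors with $M(A)$ and the $E$-piece, pass to the Grossencharacter $\psi=\tau^{12,\chi}_{1,3}\tau^{12,\chi}_{5,3}\tau^{12,\chi}_{2,6}$ over $\Q(\zeta_{12})$, and then compute the central value in Magma (sign $+1$, $L\approx 0.724$). The only place you are overcomplicating things is the descent bookkeeping: rather than tracking norm maps and twists by $\xi$, the paper simply writes out $M(A)\otimes H^1(E)$ over $\Q$ as a four-term direct sum of eigenspace tensor products and observes by inspection that this is the induction of the two-term $M$ from $\Q(\sqrt3)$ to $\Q$, making the $L$-function equality immediate.
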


\begin{proof}
The motive
\begin{multline*}
M(A)\otimes H^1(E)=(H^{\chi}\otimes H^{\chi^2}\otimes H^{\chi^5})\oplus (H^{\chi}\otimes H^{\chi^5}\otimes H^{\chi^{10}}) \\ \oplus (H^{\chi^2}\otimes H^{\chi^{11}}\otimes H^{\chi^7})\oplus (H^{\chi^{10}}\otimes H^{\chi^{11}}\otimes H^{\chi^7})
\end{multline*}
is defined over $\Q$, and the equality of $L$-functions $L(M/\Q(\sqrt{3}), s)=L((M(A)\otimes H^1(E))/\Q, s)$ is clear. To compute the central value, we proceed as in the proof of Proposition \ref{prop:L9}. Define the Grossencharacter $\psi = \tau^{12,\chi}_{1,3}\tau^{12,\chi}_{5,3}\tau^{12,\chi}_{2,6}$. 
Then we have 
\[
L(M/\Q(\sqrt{3}), s) = L(\psi/\Q(\zeta_{12}), s),
\]
and the infinity type of $\psi$ is $[[3,0],[2,1]]$.
By comparing values at primes above 
\[p\in \{ 5,7,11,13,17,19,23,37,73 \},\]
we find in Magma that $\psi$ is a character whose conductor is the unique prime ideal in $\Z[\zeta_{12}]$ of norm 4 (see \cite{magmacode}).  
Magma then reports that the sign of the functional equation of $L(M/\Q(\sqrt{3}), s)$ is $+1$ and \[L(M/\Q(\sqrt{3}), 2) = L(\psi/\Q(\zeta_{12}),2) \approx 0.724.\]
In particular, the $L$-value is non-zero.
\end{proof}
Proposition \ref{prop:L12} and the  Beilinson--Bloch conjecture \eqref{bb2} lead to the prediction that the group $\Gr^2(J_{\Q(\sqrt{3})})$ is torsion, and  in particular that the class $\kappa(C)$ is torsion, which was Theorem \ref{thm:Griffiths}. 
This example therefore gives new evidence for the Beilinson--Bloch conjecture (\ref{bb2}). 

\begin{remark}
We saw above that $\psi$ has conductor of norm 4, which is remarkably small. Thus, this curve $C^{12}_{1,3}$ shows that 
\begin{enumerate}[$(a)$]
    \item the group $\Gr_1(J)$   can (and assuming the Beilinson--Bloch conjecture, does) have rank 0 sometimes, even for non-hyperelliptic Jacobians, and
    \item we need only look at one of the first non-hyperelliptic curves to find such rank 0 examples!
\end{enumerate} 
To make $(b)$ more precise, we first recall a phenomenon which can be observed in tables of elliptic curves: if $N_r$ is the minimal conductor of an elliptic curve $E/\Q$ with Mordell--Weil rank $r$, then $N_r < N_{r+1}$.  One can (conjecturally) define analogous integers $N_r$ for the ranks of the Griffiths group $\Gr_1(J)$ of Jacobians $J$ of genus $g \geq 3$ curves $C/\Q$, using the Galois representation $F^0/F^1$ from  Section \ref{sec:BB}  to define a notion of ``Griffiths group conductor''.
Then $D^{12}$ is one of the first non-hyperelliptic curves in the sense that it has small Griffiths group conductor. Does the curve $D^{12}$ have minimal Griffiths group conductor among all non-hyperelliptic genus 3 curves over $\Q$? 
\end{remark}

\subsubsection{The $L$-function of $C^{15}_{1,5}$}
The curve $C^{15}_{1,5}$ has genus  4.  Since  $H^{4,1}(J)$ is 4-dimensional, the relevant $L$-function $L(M^{15}/\Q(\zeta_{15})^+,s)$ is a product of four $L$-functions attached to Hecke characters for the degree 8 field $\Q(\zeta_{15})$, with norm-conductors $25,81,2025,$ and $2025$ (see \cite{magmacode}). With the help of Drew Sutherland, we were able to compute these $L$-functions and their special values. Two of them were non-vanishing, but the two of norm-conductor 2025 had sign $-1$ and order of vanishing $1$. Thus, the order of vanishing of $L(M^{15}/\Q(\zeta_{15})^+,s)$ is $2$, and Beilinson--Bloch predicts the existence of two linearly independent elements of infinite order in the Griffiths group $\Gr_1(J)$, despite the fact that the Ceresa class is predicted to be torsion. We say ``predicted'', because in this case we do not have a proof that $\kappa(C)$ is torsion in $\Gr_1(J)$, only that the Abel--Jacobi image is torsion. 

\section{Hyperelliptic Isogenies}\label{s:speculation}
It is of course desirable to prove that the Ceresa cycles $\kappa_{(0,0)}$ for $D^9$, $D^{12}$, and $D^{15}$ are torsion already in the Chow group (not just their images in the Griffiths group and intermediate Jacobian). We would also like to understand, more generally, why certain non-hyperelliptic curves have torsion Ceresa cycles, and whether there is a way to characterize them. During our computations, we noticed that these curves share a certain property that seems relevant for these questions.  Namely, each of these curves admits a non-zero algebraic correspondence to the hyperelliptic curve $C^m_{1,1} \colon y^m= x(1-x)$.  In fact, this property nearly characterizes the curves $D^9$, $D^{12}$, and $D^{15}$, among all non-hyperelliptic cyclic Fermat quotients, as we will explain.

Let $J^m_{a,b}$ denote the Jacobian of $C^m_{a,b}$ and recall the new part $A^m_{a,b}$, which has CM by $\Z[\zeta_m]$.  We say two cyclic quotients $C^m_{a,b}$ and $C^m_{a',b'}$ are {\it isogenous} if their corresponding new parts are isogenous over $\bar\Q$.\footnote{A cycle in $\CH_1(J)$ is torsion if and only if its image in $\CH_1(J_{ \bar \Q})$ is torsion.  So  it is enough, for our purposes in this section, to work over $\bar \Q$.}  The isogeny relation among the curves $C^m_{a,b}$ is a coarser relation than the equivalence relation $\sim_m$ introduced in Section \ref{s:fermat}. To check whether two such curves are isogenous, it is enough to check that the corresponding new parts have the same CM types, up to the action of $\Gal(\Q(\zeta_m)/\Q)$. Recall from Section \ref{s:jacobisum} that the CM type of $A^m_{a,b}$ can be identified with the set
\[
H^{m}_{a,b}:=\{ h\in (\Z/m\Z)^\times \colon \langle ha \rangle+\langle hb \rangle < m \}.
\]
The group $\Gal(\Q(\zeta_m)/\Q) \simeq (\Z/m\Z)^\times$ acts on the set of CM types by scaling.  

\begin{lemma}\label{l:hyperellipticisogeny}
Up to isogeny, the unique hyperelliptic cyclic quotient of $F^m$ is $C^m_{1,1}$.
\end{lemma}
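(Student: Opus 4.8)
The plan is to use the classification of hyperelliptic cyclic Fermat quotients from Proposition \ref{p:hyperelliptic} together with the isogeny criterion just established, namely that two such quotients are isogenous precisely when their new parts have the same CM type up to the scaling action of $\Gal(\Q(\zeta_m)/\Q) \simeq (\Z/m\Z)^\times$. By Proposition \ref{p:hyperelliptic}, the hyperelliptic cyclic quotients are exactly those with $(a,b) \sim_m (1,1)$, together with, in the even case $m = 2n$, those with $(a,b) \sim_m (1,n)$. So the content of the lemma is that the quotient $C^m_{1,n}$, when $m = 2n$, is isogenous to $C^m_{1,1}$; since isogeny is coarser than $\sim_m$, this would collapse both hyperelliptic families into a single isogeny class represented by $C^m_{1,1}$.

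First I would reduce to comparing the CM types $H^m_{1,1}$ and $H^m_{1,n}$ as subsets of $(\Z/m\Z)^\times$, up to the scaling action. By definition,
\[
H^m_{1,1} = \{ h \in (\Z/m\Z)^\times \colon \langle h \rangle + \langle h \rangle < m \} = \{ h \in (\Z/m\Z)^\times \colon \langle h \rangle < m/2 \},
\]
which is simply the set of units whose representative in $\{1,\dots,m-1\}$ lies below $m/2$. For $m = 2n$ and $(a,b) = (1,n)$, I would compute
\[
H^m_{1,n} = \{ h \in (\Z/m\Z)^\times \colon \langle h \rangle + \langle hn \rangle < m \}.
\]
Since $h$ is a unit and $m = 2n$ is even, $h$ is odd, so $hn \equiv n \pmod{m}$ and hence $\langle hn \rangle = n = m/2$; the condition then becomes $\langle h \rangle + n < 2n$, i.e. $\langle h \rangle < n = m/2$. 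Thus $H^m_{1,n} = H^m_{1,1}$ on the nose, so the two new parts have literally the same CM type and are therefore isogenous, giving $C^m_{1,n}$ and $C^m_{1,1}$ in the same isogeny class.

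The remaining step is to confirm that these are the only hyperelliptic quotients, which is immediate from Proposition \ref{p:hyperelliptic}: every hyperelliptic cyclic quotient is $\sim_m$-equivalent either to $(1,1)$ or, in the even case, to $(1,n)$, and both cases have been shown to share the CM type of $C^m_{1,1}$. I would then conclude that up to isogeny there is a unique hyperelliptic cyclic quotient, represented by $C^m_{1,1}$. I do not anticipate a serious obstacle here; the main point to get right is the computation $\langle hn \rangle = n$ for $h$ odd when $m = 2n$, which rests on the elementary observation that units mod an even number are odd, so that $hn$ and $n$ differ by an even multiple of $n$ and hence agree modulo $m$. One should also note the mild degenerate possibility that $\langle h \rangle = m/2$ is excluded automatically since $m/2 = n$ is not a unit when $n > 1$, so the strict inequalities cause no boundary issues.
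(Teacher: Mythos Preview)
Your proposal is correct and follows exactly the paper's approach: reduce via Proposition~\ref{p:hyperelliptic} to comparing $C^m_{1,1}$ and $C^m_{1,n}$ when $m=2n$, and then check that both new parts have CM type $\{h\in(\Z/m\Z)^\times:\langle h\rangle<m/2\}$. The paper simply asserts this last equality ``by direct computation,'' whereas you have written out that computation in full.
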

\begin{proof}
For $m$ odd this follows from Proposition \ref{p:hyperelliptic}. For $m$ even, we observe that $A^m_{1,1}$ is isogenous to $A^m_{1,m/2}$ since they both have CM type $\{ t \in (\Z/m\Z)^\times \colon \langle t \rangle < m/2\}$, by direct computation.  Thus, this case also follows from Proposition \ref{p:hyperelliptic}. 
\end{proof}

We can classify those non-hyperelliptic curves $C^m_{a,b}$ which are isogenous to a hyperelliptic one. A result of Koblitz and Rohrlich implies that this can only happen if $\gcd(m,6) > 1$ \cite{koblitzrohrlich}.  In fact, there are only finitely many examples with $m\not \equiv 2\pmod{4}$:

\begin{proposition}\label{prop:hypisog}
Suppose $C = C^m_{a,b}$ is non-hyperelliptic and $m \not\equiv2\pmod{4}$. Then $C$ is isogenous to $C^m_{1,1}$ if and only if $(m,a,b)$ is equivalent to one of the following:
\[\{(9,1,2),(12,1,3), (15,1,5), (21,1,2), (21,1,3), (24,1,5), (24,1,7), (60,1,10), (60,1,19)\}.\]
\end{proposition}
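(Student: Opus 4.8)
The plan is to convert the statement into a combinatorial problem about CM types and then combine a finiteness input with a finite computation. By the criterion recalled just before the statement, $C^m_{a,b}$ is isogenous to $C^m_{1,1}$ exactly when $H^m_{a,b}$ lies in the same orbit as $H^m_{1,1}$ under the scaling action of $(\Z/m\Z)^\times\simeq\Gal(\Q(\zeta_m)/\Q)$ on CM types. A direct computation identifies the target type as the ``lower half'' system $H^m_{1,1}=\{h\in(\Z/m\Z)^\times : \langle h\rangle<m/2\}$. Thus the problem becomes: find all non-hyperelliptic $C^m_{a,b}$ with $m\not\equiv2\pmod4$ for which there exists $t\in(\Z/m\Z)^\times$ with
\[
\langle hta\rangle+\langle htb\rangle<m \iff \langle h\rangle<m/2 \qquad\text{for every } h\in(\Z/m\Z)^\times.
\]
(Replacing $(a,b)$ by any $\sim_m$-equivalent pair only scales $H^m_{a,b}$ within its $\Gal(\Q(\zeta_m)/\Q)$-orbit, so it is harmless to work with representatives.)

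Next I would isolate the constraints that make the search finite. The result of Koblitz and Rohrlich \cite{koblitzrohrlich} shows that such an equality of CM types can occur only when $\gcd(m,6)>1$, already ruling out most $m$. The essential point, and the main obstacle, is to bound $m$: this is precisely where the work of Aoki is needed. His classification of the CM types of Jacobi-sum type that coincide, up to $\Gal(\Q(\zeta_m)/\Q)$, with a hyperelliptic type shows that, under the hypothesis $m\not\equiv2\pmod4$, only finitely many levels $m$ can occur, with an explicit bound (in our range $m\le 60$). The hypothesis $m\not\equiv2\pmod4$ is used here in an essential way: for $m=2\ell$ with $\ell$ odd the level-$m$ Jacobi sums are pulled back from level $\ell$, which would manufacture an infinite family of such isogenies and destroy finiteness, which is exactly why that residue class is excluded.

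Finally, for each of these finitely many levels I would run the finite verification directly. For each $\sim_m$-equivalence class $[a,b]$ I compute $H^m_{a,b}$, test whether some $t\in(\Z/m\Z)^\times$ carries it onto $H^m_{1,1}$, and then discard the hyperelliptic classes using Proposition \ref{p:hyperelliptic} (that is, $(a,b)\sim_m(1,1)$, and additionally $(a,b)\sim_m(1,m/2)$ when $m$ is even). What remains is exactly the list of nine triples in the statement; among them are the three curves $D^9,D^{12},D^{15}$ and the genus $9$ quotient at $m=21$ singled out in the introduction. The only part of this step demanding care is the bookkeeping of equivalence classes at the largest level $m=60$, where there are the most classes to examine; everything else is mechanical.
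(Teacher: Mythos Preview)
Your approach is essentially the same as the paper's: reduce the isogeny question to equality of CM types in the $(\Z/m\Z)^\times$-orbit sense, invoke Aoki's classification to bound $m$, and finish with a finite computer search over the remaining levels. Two small discrepancies worth noting: the paper extracts only $m\le 180$ from Aoki's theorem (not $m\le 60$) and then explicitly walks through the cases of \cite[Thm.\ 0.1(3)]{aoki} to show how non-hyperellipticity together with $m\not\equiv 2\pmod 4$ eliminates each infinite family there, whereas you fold this into a single appeal to Aoki; and the infinite family excluded by the hypothesis is not really a level-reduction artifact but the explicit family $(4k+2,1,k)$ appearing in Aoki's list, as the paper makes clear in the subsequent proposition.
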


\begin{proof}
That there are no examples with $m > 180$ is a special case of a result of Aoki \cite[Thm.\ 0.1]{aoki}, who determined when two cyclic Fermat quotients are isogenous to each other. From staring at his classification, we quickly rule out any ``exceptional isogenies'' from $C^m_{1,1}$, with $m \not\equiv 2\pmod{4}$, to a non-hyperelliptic $C^m_{a,b}$. 
Indeed, $C^{m}_{a,b}$ being non-hyperelliptic implies that $(a,b)\nsim_m (1,1)$ and $(a,b)\nsim_m (1,m/2)$ in case $m$ is even (by Proposition \ref{p:hyperelliptic}). Moreover, for $m$ even, the condition $m \not\equiv2\pmod{4}$ forces $\gcd(m, m/2-1)=1$, so that $(1,1)\sim_m (m/2-1,m/2-1)$. This rules out the first three possibilities for the equivalence class of $(a,b)$ in \cite[Thm.\ 0.1 (3)]{aoki}. The fourth and fifth possibilities are ruled out by the fact that $1$ is odd and $m \not\equiv2\pmod{4}$ respectively.

For $m \leq 180$, it is enough to check which of the finitely many equivalence classes of curves $C^m_{a,b}$ has new part $A^m_{a,b}$ with the same CM type as $A^m_{1,1}$, up to the action of $\Gal(\Q(\zeta_m)/\Q)\simeq (\Z/m\Z)^\times$.  It turns out that up to equivalence the only non-hyperelliptic curves $C^m_{a,b}$ with this CM type are those listed (see \cite{sagecode} for the relevant code). 
\end{proof}

Notice that the first three curves on this list are $D^9$, $D^{12}$, and $D^{15}$.  When $m \equiv 2\pmod{4}$, the situation is somewhat different, since there are actually infinitely many examples.

\begin{proposition}
Suppose $C = C^m_{a,b}$ is non-hyperelliptic and $m = 4k+2$, for some $k \geq 0$. Then $C$ is isogenous to $C^m_{1,1}$ if and only if $(m,a,b)$ is equivalent to $(m,1,k)$ or one of the following:
\[(14,1,2),(18,1,2),(18,1,5),(30,1,2),(30,1,3), (30,1, 4), (30,1,8), (30,2, 3),\]
\[
(42,1, 2), (42,1, 4) ,(42,1, 5), (42,1, 8), (42,1, 11), (42,1,15), (78,1,16). \]
\end{proposition}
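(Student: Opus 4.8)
The plan is to mirror the proof of Proposition \ref{prop:hypisog}, splitting the argument into three parts: a direct verification of the infinite family $(m,1,k)$, an appeal to Aoki's classification \cite[Thm.\ 0.1]{aoki} for large $m$, and a finite machine computation for small $m$. Throughout I use the isogeny criterion recalled above: $C^m_{a,b}$ is isogenous to $C^m_{1,1}$ exactly when $H^m_{a,b}$ agrees with $H^m_{1,1} = \{h \in (\Z/m\Z)^\times : \langle h \rangle < m/2\}$ up to the scaling action of $(\Z/m\Z)^\times$.

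First I would settle the infinite family. Write $n = m/2 = 2k+1$, so that $2k = n-1$. Every $r$ coprime to $m$ is odd, whence $rn \equiv n \pmod m$ because $(r-1)n = \tfrac{r-1}{2}\,m \equiv 0 \pmod m$. Multiplying by $2$ yields the key congruence $2\langle rk \rangle \equiv r(2k) = rn - r \equiv n - r \pmod m$. A short case split on whether $r < n$ or $r > n$ (the value $r = n$ being excluded by coprimality) then shows that $\langle r \rangle + \langle rk \rangle < m$ if and only if $r < m/2$; hence $H^m_{1,k} = H^m_{1,1}$ exactly, so $C^m_{1,k}$ is isogenous to $C^m_{1,1}$. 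That $C^m_{1,k}$ is non-hyperelliptic (for $k \geq 2$, i.e.\ $m \geq 10$) follows from Proposition \ref{p:hyperelliptic}: $(1,k) \nsim_m (1,1)$ because the multiset $\{1,k,m-1-k\}$ has no repeated entry while every $t\{1,1,m-2\}$ does, and $(1,k) \nsim_m (1,m/2)$ by a direct congruence check.

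For the remaining (``only if'') direction I would invoke Aoki's theorem, as in Proposition \ref{prop:hypisog}, but the arithmetic differs in an essential way. Since $m = 4k+2$ we have $\gcd(m, m/2 - 1) = \gcd(4k+2, 2k) = 2$, so the coprimality that previously eliminated the first three ``exceptional isogeny'' possibilities in Aoki's list is no longer available, and the possibility that was previously discarded because $m \not\equiv 2 \pmod 4$ is now active. Re-reading Aoki's classification under the standing hypothesis $m \equiv 2 \pmod 4$, one finds that this newly active possibility produces exactly the family $(m,1,k)$ verified above, that the possibility excluded by ``$1$ is odd'' remains excluded, and that the possibilities no longer killed by the coprimality obstruction can survive only for bounded $m$, forcing $m \leq 180$ for any non-hyperelliptic isogeny to $C^m_{1,1}$ outside the $(m,1,k)$ family. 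Finally, for $m \leq 180$ with $m \equiv 2 \pmod 4$ one enumerates the finitely many equivalence classes $[a,b]$ and tests by computer whether $H^m_{a,b}$ is a $(\Z/m\Z)^\times$-translate of $H^m_{1,1}$ (as in \cite{sagecode}); discarding the members of the $(m,1,k)$ family from the output leaves precisely the sporadic list.

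The main obstacle will be the careful reading of Aoki's classification in the regime $m \equiv 2 \pmod 4$: one must confirm that $(m,1,k)$ is the \emph{only} infinite family of hyperelliptic-to-non-hyperelliptic isogenies and that every other such isogeny is bounded by $m \leq 180$, since the clean divisibility obstructions available when $m \not\equiv 2 \pmod 4$ have disappeared. Once this bound is in hand, the finite search and the bookkeeping needed to separate genuine sporadic examples from infinite-family members are routine.
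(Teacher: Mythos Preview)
Your proposal is correct and follows the same architecture as the paper's (very terse) proof: invoke Aoki's classification to bound the sporadic cases by $m \leq 180$ and identify the infinite family, then finish with the finite computer search. The one difference is that you verify $H^m_{1,k} = H^m_{1,1}$ by a direct elementary computation, whereas the paper simply reads the infinite family off of Aoki's result; your addition is a nice self-contained check but does not change the overall strategy.
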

\begin{proof}
The proof is similar. The infinite family $C^m_{1,k}$ can again be read off of Aoki's result.
\end{proof}

We say a non-hyperelliptic $C^m_{a,b}$ is {\it minimal} if it does not cover any non-hyperelliptic cyclic Fermat quotient of lower genus. For our purposes, the minimal non-hyperelliptic $C^m_{a,b}$ are the first curves to study, since if the Ceresa cycle of a curve has infinite order, so will the Ceresa cycle of any cover. For example, the curve $C^{21}_{1,2}$ listed above is isogenous to a hyperelliptic curve, but it also covers the Klein quartic curve $C^7_{1,2}$. Since the latter has infinite order Ceresa cycle \cite{kimura, tadokoro}, so does $C^{21}_{1,2}.$

\begin{proposition}
If $C = C^m_{a,b}$ is a minimal non-hyperelliptic cyclic Fermat quotient which is isogenous to $C^m_{1,1}$, then $C$ is equivalent to one of the following curves:
\[\{(9,1,2),(12,1,3), (15,1,5), (21,1,3)\} \cup \{ (4k+2,1,k) \colon k \in \Z\}.\]
\end{proposition}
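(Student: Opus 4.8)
The plan is to combine the classification in the two preceding propositions with a finite analysis of the covering relation among cyclic Fermat quotients. First I would record explicitly how a curve $C^m_{a,b}$ covers lower-degree quotients: for every proper divisor $m'\mid m$, the substitution $y\mapsto y^{m/m'}$ followed by an elementary change of variable produces a dominant map $C^m_{a,b}\to C^{m'}_{a',b'}$, where $a'\equiv a$ and $b'\equiv b\pmod{m'}$, reduced to the range $(0,m')$. This target is a genuine cyclic Fermat quotient precisely when the reduced data is non-degenerate, i.e.\ when none of $a'$, $b'$, $a'+b'$ is $\equiv 0\pmod{m'}$ (otherwise the target is rational and does not count); whether it is hyperelliptic is then decided by Proposition \ref{p:hyperelliptic} applied to $(a',b')$ and $m'$, and whenever it is non-hyperelliptic its genus, computed from \eqref{genus}, turns out to be strictly smaller than that of $C^m_{a,b}$ in each case below.

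By Proposition \ref{prop:hypisog} and the subsequent proposition treating $m\equiv 2\pmod 4$, any non-hyperelliptic $C^m_{a,b}$ isogenous to $C^m_{1,1}$ is equivalent either to a curve on the two finite sporadic lists or to a member of the family $(4k+2,1,k)$. Hence to prove the stated implication it suffices to show that every sporadic curve \emph{not} appearing in $\{(9,1,2),(12,1,3),(15,1,5),(21,1,3)\}$ fails to be minimal, by exhibiting for each one a proper divisor $m'$ whose reduction is a non-hyperelliptic cyclic Fermat quotient of lower genus. This is a finite computation (readily automated, as in \cite{sagecode}). For instance $(21,1,2)$ and $(14,1,2)$ cover the Klein quartic $C^7_{1,2}$; $(24,1,5)$ covers $C^8_{1,5}$; $(24,1,7)$ and $(60,1,19)$ cover $C^{12}_{1,7}$; $(18,1,2)$ and $(18,1,5)$ cover $C^9_{1,2}$ and $C^9_{1,5}$ respectively; and each remaining sporadic curve of degree $30$, $42$, $60$, $78$ covers a non-hyperelliptic quotient of degree dividing $m$ (for example $(30,2,3)\to C^{15}_{2,3}$, $(42,1,15)\to C^{21}_{1,15}$, $(78,1,16)\to C^{13}_{1,3}$), as one checks using \eqref{genus} and Proposition \ref{p:hyperelliptic}. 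For completeness I would verify that the four retained curves are genuinely minimal: every proper divisor of $9,12,15,21$ yields a reduction that is degenerate, of genus $\le 2$, or hyperelliptic (e.g.\ for $(21,1,3)$ the only nontrivial reduction is $C^7_{1,3}$, which is hyperelliptic since $(1,3)\sim_7(1,1)$).

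Putting these together: every minimal non-hyperelliptic $C^m_{a,b}$ isogenous to $C^m_{1,1}$ lies on the classification lists but is not among the eliminated sporadic curves, so it must be equivalent to one of the four sporadic curves or to a member of the family $(4k+2,1,k)$, which is the claim.

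The main obstacle is bookkeeping rather than conceptual: one must correctly detect the degenerate reductions (where $a'$, $b'$, or $a'+b'$ vanishes mod $m'$, so that the cover is only to a rational curve) and separate out the hyperelliptic reductions via Proposition \ref{p:hyperelliptic}, so that a curve is declared non-minimal only when it genuinely covers a non-hyperelliptic quotient of strictly lower genus. I would also stress that the family $(4k+2,1,k)$ is retained wholesale: the proposition asserts only that minimal curves lie in the stated set, and in fact not every family member is minimal — for example $(30,1,7)$ covers the non-hyperelliptic curve $C^{10}_{1,7}\cong C^{10}_{1,2}$ — so no per-member minimality check is needed for the family.
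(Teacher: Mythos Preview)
Your proposal is correct and follows essentially the same approach as the paper: run through the finite lists from the two preceding propositions, and for each sporadic curve exhibit a proper divisor giving a non-hyperelliptic lower-genus quotient, while verifying minimality of the four surviving curves and retaining the family $(4k+2,1,k)$ wholesale. Your parametrization of the covers via reduction $(a,b)\mapsto(a\bmod m',\,b\bmod m')$ for $m'\mid m$ is the same mechanism the paper uses (they index by $d=m/m'$ and write the cover as $C^m_{da,db}\simeq C^{m/h}_{a',b'}$), and your explicit treatment of the degenerate reductions and of the non-minimality of some family members (e.g.\ $(30,1,7)$) is a welcome clarification that the paper leaves implicit.
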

\begin{proof}
For each curve $(m,a,b)$ in Proposition \ref{prop:hypisog}, we must determine whether there exists a divisor $d \mid m$, such that $C^m_{da,db}$ is non-hyperelliptic. We have $C^m_{da,db} \simeq C^{m/h}_{a',b'}$ where $h = \gcd(\langle da\rangle,\langle db\rangle,\langle d(m-a-b)\rangle)$, $a' = \langle da\rangle/h$ and $b' = \langle db \rangle/h$.  For example, the only interesting subcover of $C^{21}_{1,3}$ is $C^7_{1,3} \simeq C^7_{1,1}$, which is hyperelliptic, so it is minimal.  We must do the same to the 15 sporadic examples with $m \equiv 2\pmod{4}$, but we find that they are all non-minimal.
\end{proof}
Thus, the curves $D^9$, $D^{12}$,$D^{15}$, and $C^{21}_{1,3}$ are characterized as the minimal non-hyperelliptic cyclic Fermat quotients $C^m_{a,b}$ with $m \not\equiv 2\pmod{4}$ that are isogenous to a hyperelliptic one.  
\begin{remark}
Not  all curves of the form $C^{4k+2}_{1,k}$ are minimal, but infinitely many of them are. For example, if $4k+2 = 2p$ for some prime $p$, then the curve is minimal.
 \end{remark}

\subsection{Future directions}
It is natural to wonder whether there is an algebraic correspondence from $C^m_{1,1}$ to $D^m$ which sends the Ceresa class of $C^m_{1,1}$ (in the Chow group of $J^m_{1,1}$) to that of $D^m$ or to some multiple of it.  This would prove that the  Ceresa cycle is torsion in these cases, and it would give a nice geometric explanation as well.  In other words, even though the curves $D^m$ are not hyperelliptic, perhaps the motives $H^{2g-3}(\Jac(D^m))$ are hyperelliptic in a certain sense.

Of course, the presence of an isogeny from a non-hyperelliptic Jacobian to a hyperelliptic Jacobian is not by itself noteworthy. However, in our examples the isogeny intertwines an order $m$ automorphism on both curves. In particular, the algebraic correspondence which realizes the isogeny (viewed as a higher genus curve covering both $C^m_{1,1}$ and $D^m$) should itself carry an order $m$ automorphism and should itself be a cyclic Fermat quotient of level $km$ for some $k \geq 1$. Given that the presence of a high order automorphism is what allows us to detect the torsion Ceresa cycle in the first place, this seems like a fruitful direction to explore further.  The next step would be to find the explicit cyclic Fermat quotient models for these algebraic correspondences, so that one can compute pullbacks of Ceresa cycles.    

We hope to study these questions in future work. It would also be interesting to compute numerically the Abel--Jacobi image of the Ceresa cycle for the genus 9 curve $C^{21}_{1,3}$, to see whether it too is torsion, despite the fact that Beauville's method does not apply. 

\subsection*{Acknowledgements} 
We thank A.\ Beauville for discussing the proof of \cite{beauvilleschoen}, and we thank A.\  Sutherland for kindly helping us with the more intensive $L$-function computations. During the preparation of this work, the first author was supported by an Emily Erskine Endowment Fund Postdoctoral Research Fellowship. The second author
was supported by the Israel Science Foundation (grant No. 2301/20).

\phantomsection
\bibliographystyle{plain}
\bibliography{ExpCerRevised.bib}

\end{document}